\documentclass[12pt]{amsart}
\textwidth=6.5in \oddsidemargin=0in \evensidemargin=0in
\topmargin=0in \textheight=8.5in

\usepackage[all]{xy}
\usepackage{graphicx}
\usepackage{tikz}  

\usepackage{amsmath}
\usepackage{amssymb}
\usepackage{amsfonts}
\usepackage{mathrsfs}
\usepackage{mathtools}

\newcommand{\Cdb}{\ensuremath{\mathbb{C}}}

\newcommand{\Rdb}{\ensuremath{\mathbb{R}}}

\newcommand{\A}{\mbox{${\mathcal A}$}}
\newcommand{\B}{\mbox{${\mathcal B}$}}

\newcommand{\R}{\mbox{${\mathcal R}$}}
\renewcommand{\S}{\mbox{${\mathcal S}$}}

\newcommand{\norm}[1]{\Vert#1\Vert}
\newcommand{\bignorm}[1]{\bigl\Vert#1\bigr\Vert}
\newcommand{\Bignorm}[1]{\Bigl\Vert#1\Bigr\Vert}

\newtheorem{theorem}{Theorem}[section]
\newtheorem{lemma}[theorem]{Lemma}
\newtheorem{question}[theorem]{Question}
\newtheorem{corollary}[theorem]{Corollary}
\newtheorem{proposition}[theorem]{Proposition}

\theoremstyle{remark}
\newtheorem{remark}[theorem]{\bf Remark}
\theoremstyle{definition}

\numberwithin{equation}{section}

\begin{document}

\title[Extension of completely positive maps]
{On the extension of positive maps to Haagerup 
non-commutative $L^p$-spaces}

\bigskip
\author[C. Le Merdy]{Christian Le Merdy}
\email{clemerdy@univ-fcomte.fr}
\address{Laboratoire de Math\'ematiques de Besan\c con, 
Universit\'e de Franche-Comt\'e, 16 route de Gray
25030 Besan\c{c}on Cedex, FRANCE}

\author[S. Zadeh]{Safoura Zadeh}
\email{jsafoora@gmail.com}
\address{School of Mathematics, University of Bristol, United Kingdom}

\date{\today}


\begin{abstract} 
Let $M$ be a von Neumann algebra, let $\varphi$ be a normal faithful 
state on $M$ and let $L^p(M,\varphi)$ be the associated Haagerup 
non-commutative $L^p$-spaces, for $1\leq p\leq\infty$. Let 
$D\in L^1(M,\varphi)$ be the density of $\varphi$. Given a positive map $T\colon M\to M$ such that 
$\varphi\circ T\leq C_1\varphi$ 
for some $C_1\geq 0$, we study the boundedness of the $L^p$-extension
$T_{p,\theta}\colon D^{\frac{1-\theta}{p}}
MD^{\frac{\theta}{p}}\to L^p(M,\varphi)$ which maps 
$D^{\frac{1-\theta}{p}} x D^{\frac{\theta}{p}}$ to 
$D^{\frac{1-\theta}{p}} T(x) D^{\frac{\theta}{p}}$ for all 
$x\in M$. Haagerup-Junge-Xu showed that $T_{p,\frac12}$ is always bounded
and left open the question whether $T_{p,\theta}$ is bounded for $\theta\not=\frac12$. 
We show that for any $1\leq p<2$ and any $\theta\in [0,
2^{-1}(1-\sqrt{p-1})]\cup[2^{-1}(1+\sqrt{p-1}), 1]$, there exists 
a completely positive $T$ such that
$T_{p,\theta}$ is unbounded. We also show that if $T$ is
$2$-positive, then $T_{p,\theta}$ is bounded provided that 
$p\geq 2$ or $1\leq p<2$ and $\theta\in[1-p/2,p/2]$.
\end{abstract}

\maketitle

\noindent
{\it 2000 Mathematics Subject Classification :}  46L51.

\smallskip
\noindent
{\it Key words:} Non-commutative $L^p$-spaces, positive maps, extensions.

\vskip 1cm

\section{Introduction}\label{Intro}
Let $M$ be a von Neumann algebra equipped with a normal faithful 
state $\varphi$. Let $T\colon M\to M$ be a positive map such that 
$\varphi\circ T\leq C_1\varphi$ on the positive cone $M^+$, 
for some constant $C_1\geq 0$.
Assume first that $\varphi$ is a trace (that is, $\varphi(xy)=\varphi(yx)$ for all $x,y\in M$) and consider the associated 
non-commutative $L^p$-spaces ${\mathcal L}^p(M,\varphi)$  
(see e.g. \cite{DDDP, PX} or \cite[Chapter 4]{Hiai}). Let $C_\infty=\norm{T}$.
Then for all $1\leq p<\infty$, $T$ extends to a bounded map 
on ${\mathcal L}^p(M,\varphi)$, with
\begin{equation}\label{TracialExtension}
\bignorm{T\colon {\mathcal L}^p(M,\varphi)\longrightarrow
{\mathcal L}^p(M,\varphi)}\leq C_\infty^{1-\frac{1}{p}}C_1^{\frac{1}{p}},
\end{equation}
see \cite[Lemma 1.1]{JX}. This extension result plays a significant role 
in various aspects of operator theory on non-commutative $L^p$-spaces, 
in particular for the study of diffusion operators or semigroups on those spaces, 
see for example  \cite{A, DL, HRW} or \cite[Chapter 5]{JLX}.

Let us now drop the tracial assumption on $\varphi$. 
For any $1\leq p\leq \infty$, 
let $L^p(M,\varphi)$ denote the Haagerup non-commutative $L^p$-space
$L^p(M,\varphi)$ associated with $\varphi$ \cite{H, HJX, Hiai, Terp}. 
These spaces extend the tracial non-commutative $L^p$-spaces 
${\mathcal L}^p(\cdots)$
in a very beautiful way and many topics in operator theory which had 
been first studied on tracial non-commutative $L^p$-spaces were/are
investigated on Haagerup non-commutative $L^p$-spaces. This has led to 
several major advances, see in particular \cite{HJX}, \cite[Section 7]{JX},
\cite{CDS}, \cite{A2}  and \cite{JL}.

The question of extending a positive
map $T\colon M\to M$ to  $L^p(M,\varphi)$ was 
first considered in \cite[Section 7]{JX} and \cite[Section 5]{HJX}.
Let $D\in L^1(M,\varphi)$ be the density of $\varphi$, let
$1\leq p<\infty$ and let $\theta\in[0,1]$. Let 
$T_{p,\theta}\colon D^{\frac{1-\theta}{p}}
MD^{\frac{\theta}{p}}\to L^p(M,\varphi)$ be defined by 
\begin{equation}\label{T}
T_{p,\theta}\Bigl(D^{\frac{1-\theta}{p}} x D^{\frac{\theta}{p}}\Bigr)
= D^{\frac{1-\theta}{p}} T(x) D^{\frac{\theta}{p}},\qquad x\in M.
\end{equation}
(See Section 2 for the necessary background on $D$ and the above definition.)
Then \cite[Theorem 5.1]{HJX} shows that if 
$\varphi\circ T\leq C_1\varphi$,  then
$T_{p,\frac12}$ extends to a bounded map on $L^p(M,\varphi)$, 
with 
$$
\norm{T_{p,\frac12}\colon L^p(M,\varphi)\longrightarrow
L^p(M,\varphi)}\leq C_\infty^{1-\frac{1}{p}}C_1^{\frac{1}{p}}.
$$
This extends the tracial case (\ref{TracialExtension}), see 
Remark \ref{Tracial}. Furthermore, \cite[Proposition 5.5]{HJX} shows that  
if $T$ commutes with the modular automorphism group of $\varphi$, then 
$T_{p,\theta}=T_{p,\frac12}$ for all $\theta\in[0,1]$. 

In addition to the above results, Haagerup-Junge-Xu stated as 
an open problem the question 
whether $T_{p,\theta}$ is always bounded for $\theta\not=\frac12$
(see \cite[Section 5]{HJX}).
The main result of the present paper is a negative answer to this question.
More precisely, we show that if $1\leq p<2$ and if either $0\leq\theta<
2^{-1}(1-\sqrt{p-1})$ or $2^{-1}(1+\sqrt{p-1})<\theta\leq 1$, then there exists 
$M,\varphi$ as above and a unital completely positive map 
$T\colon M\to M$ such  that $\varphi\circ T=\varphi$ and $T_{p,\theta}$ is unbounded, see Theorem \ref{Main}.

We also show that for any $M,\varphi$ as above and for any
$2$-positive map $T\colon M\to M$ such that 
$\varphi\circ T\leq C_1\varphi$ for some $C_1\geq 0$,
then $T_{p,\theta}$ is bounded for all $p\geq 2$ and all
$\theta\in[0,1]$, see Theorem \ref{p=2}. In other words, the 
Haagerup-Junge-Xu problem has a positive solution for $p\geq 2$, provided that
we restrict to $2$-positive maps. We also show, under the 
same assumptions, that $T_{p,\theta}$ is bounded for all 
$1\leq p\leq 2$ and all $\theta\in[1-p/2,p/2]$, see 
Theorem \ref{GoodCase}.

Section 2 contains preliminaries on the $L^p(M,\varphi)$ 
and on the question
whether $T_{p,\theta}$ is bounded. Section 3 presents a way to compute
$\norm{T_{p,\theta}}$ in the case when $M=M_n$ is a matrix algebra,
which plays a key role in the last part of the paper.
Section 4 contains the extension results stated in the previous paragraph.
Finally, Sections 5 and 6 are devoted to the construction of 
examples for which $T_{p,\theta}$ is unbounded. 

\section{The extension problem}\label{Extension}
Throughout we consider a  von Neumann algebra $M$ and we 
let $M_*$ denote its predual. We let 
$M^+$ and $M_{*}^+$ denote the positive cones of $M$ and $M_*$,
respectively.

\smallskip
\subsection{Haagerup non-commutative $L^p$-spaces}\label{E1}

Assume that $M$ is $\sigma$-finite and
let $\varphi$ be a normal faithful state on $M$.
We shall briefly recall
the definition of the Haagerup non-commutative $L^p$-spaces
$L^p(M,\varphi)$ associated with $\varphi$, as well as some
of their main features. We refer the reader 
to \cite{H}, \cite[Section 1]{HJX}, \cite[Chapter 9]{Hiai}, \cite[Section 3]{PX} and \cite{Terp}
for details and 
complements. We note that $L^p(M,\varphi)$ can actually be defined when 
$\varphi$ is any normal faithful weight on $M$. The assumption that 
$\varphi$ is a state makes the description below a little simpler.

Let $(\sigma^\varphi_t)_{t\in{\mathbb R}}$ be the modular automorphism group
of $\varphi$ \cite[Chapter VIII]{T2} and let 
$$
\mathcal R = M \rtimes_{\sigma^\varphi} \Rdb\subset M\overline{\otimes}
B(L^2(\Rdb))
$$
be the resulting crossed product, see e.g. \cite[Chapter X]{T2}. 
If $M\subset B(H)$ for 
some Hilbert space $H$, then we have $\mathcal R \subset B(L^2(\Rdb;H))$.
Let us regard $M$ as a sub-von Neumann
algebra of $\R$ in the natural  way. 
Then $(\sigma^\varphi_t)_{t\in{\mathbb R}}$ is given by
\begin{equation}\label{lambda}
\sigma^\varphi_t(x) =\lambda(t)x\lambda(t)^*,\qquad t\in\Rdb,\ x\in M,
\end{equation}
where $\lambda(t)\in B(L^2(\Rdb;H))$ is defined 
by $[\lambda(t)\xi](s)=\xi(s - t)$ for all $\xi\in L^2(\Rdb;H)$.
This is a unitary.
For any $t\in\Rdb$, define 
$W(t) \in B(L^2(\Rdb;H))$ by $[W(t)\xi](s) =e^{-its}\xi(s)$ 
for all $\xi\in L^2(\Rdb;H)$.
Then the dual action 
$\widehat{\sigma}^\varphi\colon\Rdb
\to {\rm Aut}(\R)$ of $\sigma^\varphi$  
is defined by 
$$
\widehat{\sigma}^\varphi_t(x) = W(t)x W(t)^*,\qquad \ t\in\Rdb,\ x\in \R.
$$
(See \cite[§ VIII.2]{T2}.) A remarkable fact is that
for any $x\in\R$, $\widehat{\sigma}^\varphi_t(x)=x$ 
for all $t\in\Rdb$ if and only if 
$x\in M$.

There exists a unique normal semi-finite trace
$\tau_0$ on $\R$ such that 
$$
\tau_0\circ \widehat{\sigma}^\varphi_t =e^{-t}\tau_0,\qquad 
t\in\Rdb,
$$
see e.g. \cite[Theorem 8.15]{Hiai}.
This trace gives rise to the $*$-algebra $L^{0}(\R,\tau_0)$ of
$\tau_0$-measurable operators \cite[Chapter 4]{Hiai}. Then for any $1\leq p\leq\infty$, 
the Haagerup $L^p$-space $L^p(M,\varphi)$ is defined as
$$
L^p(M,\varphi) = \bigl\{
y\in L^0(\R,\tau_0)\, :\, \widehat{\sigma}^\varphi_t(y) =e^{-\frac{t}{p}}y\ \hbox{for all}\ t\in\Rdb\bigr\}.
$$
At this stage, this is just a $*$-subspace of $L^0(\R,\tau_0)$ (with no norm). 
One defines its positive cone as
$$
L^p(M,\varphi)^+= L^p(M,\varphi)\cap L^0(\R,\tau_0)^+.
$$
It follows from above that $L^\infty(M,\varphi)=M.$

Let $\psi\in M_{*}^+$, that we regard as a normal weight on $M$ and
let $\widehat{\psi}$ be its dual weight on $\R$ \cite[§ VIII.1]{T2}.
Let $h_\psi$ be the Radon-Nikodym derivative
of $\widehat{\psi}$ with respect to $\tau_0$. That is, $h_\psi$
is the unique positive operator affiliated 
with $\R$ such that 
$$
\widehat{\psi}(y) =\tau_0\Bigl(h_\psi^\frac12 yh_\psi^\frac12\Bigr),
\qquad y\in\R_+.
$$
It turns out that $h_\psi$ belongs to
$L^1(M,\varphi)^+$ for all
$\psi\in M_{*}^+$ and that the mapping
$\psi\mapsto h_\psi$ is a bijection
from $M_{*}^+$ onto $L^1(M,\varphi)^+$. 
This bijection readily extends to a linear isomorphism
$ M_* \longrightarrow L^1(M,\varphi)$, still denoted by 
$\psi\mapsto h_\psi$. Then $L^1(M,\varphi)$ is equipped with the norm 
$\norm{\,\cdotp}_1$ inherited from $M_*$, that is,
$\norm{h_\psi}_{1}=\norm{\psi}_{M_*}$ for all $\psi\in M_*$.
Next, for any $1\leq p<\infty$ and any $y\in L^p(M,\varphi)$, the positive 
operator $\vert y\vert$ belongs to $L^p(M,\varphi)$ as well (thanks to the 
polar decomposition) and hence 
$\vert y\vert^p$ belongs to $L^1(M,\varphi)$. This allows 
to define $\norm{y}_p=\norm{\vert y\vert^p}^{\frac{1}{p}}$ for all
$y\in L^p(M,\varphi)$. Then $\norm{\,\cdotp}_p$ is a complete norm on 
$L^p(M,\varphi)$.

The Banach spaces $L^p(M,\varphi)$, $1\leq p\leq\infty$, satisfy the 
following version of H\"older's inequality (see e.g. \cite[Proposition 9.17]{Hiai}).

\begin{lemma}\label{Holder} Let $1\leq p,q,r\leq \infty$ 
such that $p^{-1}+q^{-1}
=r^{-1}$. Then for all $x\in L^p(M,\varphi)$ and all
$y\in L^q(M,\varphi)$, the product $xy$ belongs to $L^r(M,\varphi)$ and $\norm{xy}_r\leq
\norm{x}_p\norm{y}_q$.
\end{lemma}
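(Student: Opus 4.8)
The plan is to recast the inequality in terms of generalized singular numbers with respect to $\tau_0$ and then invoke a classical majorization inequality. The membership $xy\in L^r(M,\varphi)$ is the easy part: the $\tau_0$-measurable operators form a $*$-algebra, so $xy\in L^0(\R,\tau_0)$, and since each $\widehat\sigma^\varphi_t$ is a $*$-automorphism of $L^0(\R,\tau_0)$, hence multiplicative, $\widehat\sigma^\varphi_t(xy)=\widehat\sigma^\varphi_t(x)\,\widehat\sigma^\varphi_t(y)=e^{-t/p}e^{-t/q}xy=e^{-t/r}xy$ for all $t\in\Rdb$, so that $xy\in L^r(M,\varphi)$. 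If one of $x,y,xy$ is $0$, or if $p=q=r=\infty$ (where the estimate is submultiplicativity of the operator norm), we are done, so assume otherwise.

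The key ingredient is a description of singular numbers: for every nonzero $z\in L^s(M,\varphi)$, $1\leq s\leq\infty$, its generalized singular value function relative to $\tau_0$, namely $\mu_u(z):=\inf\bigl\{\lambda>0\,:\,\tau_0\bigl(\mathbf 1_{(\lambda,\infty)}(\vert z\vert)\bigr)\leq u\bigr\}$, equals $\norm{z}_s\,u^{-1/s}$ for all $u>0$ (with $u^{-1/\infty}:=1$). For $s<\infty$: since $\widehat\sigma^\varphi_t$ is a $*$-automorphism, $\widehat\sigma^\varphi_t(\vert z\vert)=\vert\widehat\sigma^\varphi_t(z)\vert=e^{-t/s}\vert z\vert$, and as it commutes with the Borel functional calculus we get $\widehat\sigma^\varphi_t\bigl(\mathbf 1_{(\lambda,\infty)}(\vert z\vert)\bigr)=\mathbf 1_{(\lambda e^{t/s},\infty)}(\vert z\vert)$; applying $\tau_0$ and using $\tau_0\circ\widehat\sigma^\varphi_t=e^{-t}\tau_0$, the function $f(\lambda):=\tau_0\bigl(\mathbf 1_{(\lambda,\infty)}(\vert z\vert)\bigr)$ satisfies $f(\lambda e^{t/s})=e^{-t}f(\lambda)$, hence $f(\lambda)=f(1)\lambda^{-s}$ and $\mu_u(z)=\bigl(f(1)/u\bigr)^{1/s}$. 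Finally, $\mathbf 1_{(1,\infty)}(\vert z\vert)=\mathbf 1_{(1,\infty)}(\vert z\vert^s)$ and $\vert z\vert^s\in L^1(M,\varphi)^+$, so $f(1)=\tau_0\bigl(\mathbf 1_{(1,\infty)}(\vert z\vert^s)\bigr)=\bignorm{\vert z\vert^s}_1=\norm{z}_s^s$, where we use the isomorphism $L^1(M,\varphi)\cong M_*$ and the relation it carries, $\tau_0\bigl(\mathbf 1_{(1,\infty)}(h)\bigr)=\norm{h}_1$ for $h\in L^1(M,\varphi)^+$. For $s=\infty$: the spectral projections of $\vert z\vert$ lie in $M$, and any nonzero projection $e\in M$ satisfies $\tau_0(e)=\tau_0(\widehat\sigma^\varphi_t(e))=e^{-t}\tau_0(e)$ for all $t$, hence $\tau_0(e)=\infty$; thus $\tau_0\bigl(\mathbf 1_{(\lambda,\infty)}(\vert z\vert)\bigr)=\infty$ for $\lambda<\norm{z}_M$ and $=0$ for $\lambda\geq\norm{z}_M$, giving $\mu_u(z)=\norm{z}_M$ for all $u>0$.

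It remains to feed this into the log-majorization (continuous Horn) inequality for $\tau_0$-measurable operators,
\[
\int_0^t\log\mu_u(ab)\,du\ \leq\ \int_0^t\log\mu_u(a)\,du+\int_0^t\log\mu_u(b)\,du ,\qquad t>0,
\]
a classical fact (see e.g. \cite{Hiai}). Taking $a=x$ and $b=y$ and writing $\log\mu_u(x)=\log\norm{x}_p-p^{-1}\log u$, $\log\mu_u(y)=\log\norm{y}_q-q^{-1}\log u$, $\log\mu_u(xy)=\log\norm{xy}_r-r^{-1}\log u$ (all integrable on $(0,t)$), the hypothesis $p^{-1}+q^{-1}=r^{-1}$ makes every $\log u$ contribution cancel after integration over $(0,t)$; dividing by $t>0$ and exponentiating leaves exactly $\norm{xy}_r\leq\norm{x}_p\norm{y}_q$.

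The step I expect to require the most care is the singular-number formula, and within it the exact multiplicative constant: this hinges on the precise link between the canonical trace functional on $L^1(M,\varphi)$ arising from $L^1(M,\varphi)\cong M_*$ and the semifinite trace $\tau_0$ on $\R$, i.e.\ on the identity $\tau_0(\mathbf 1_{(1,\infty)}(h))=\norm{h}_1$ for positive $h\in L^1(M,\varphi)$; granting this, the remainder is purely formal. An alternative would be to run the whole argument through Kosaki's complex interpolation identity $L^p(M,\varphi)=[M,M_*]_{1/p}$ and interpolate, by Calder\'on's multilinear method, the three multiplications $M\times M\to M$, $M\times M_*\to M_*$ and $M_*\times M\to M_*$; there the corresponding difficulty is the control of the modular unitaries $D^{it}$ appearing along the interpolation paths.
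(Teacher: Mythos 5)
The paper offers no proof of this lemma: it is quoted from \cite[Proposition 9.17]{Hiai}, so there is no internal argument to compare against. Your proof is correct, and it is in substance the standard proof found in that reference, which likewise passes through the identity $\mu_u(z)=u^{-1/s}\norm{z}_s$ for the generalized singular numbers of $z\in L^s(M,\varphi)$ and the Fack--Kosaki log-submajorization $\int_0^t\log\mu_u(ab)\,du\leq\int_0^t\log\bigl(\mu_u(a)\mu_u(b)\bigr)\,du$; your cancellation of the $\log u$ terms under $p^{-1}+q^{-1}=r^{-1}$, and your treatment of $s=\infty$ via $\tau_0(e)\in\{0,\infty\}$ for projections $e\in M$, are both fine. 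The one point to be honest about is the one you flag yourself: the normalization $\tau_0\bigl(\mathbf{1}_{(1,\infty)}(h_\psi)\bigr)=\psi(1)$ is \emph{not} ``carried'' by the isomorphism $M_*\simeq L^1(M,\varphi)$ as defined here (that isomorphism merely decrees $\norm{h_\psi}_1:=\norm{\psi}_{M_*}$); it is a genuine foundational lemma of Haagerup's construction, proved from the structure of the dual weight $\widehat{\psi}$ (see \cite{Terp} or \cite{Hiai}). Granting that fact and the Fack--Kosaki inequality, both classical, your argument is complete.
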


Let $D$ be the Radon-Nikodym derivative
of $\widehat{\varphi}$ with respect to $\tau_0$
and recall that $D\in L^1(M,\varphi)^+$. This operator is called the density of $\varphi$. 
Recall that we regard $M$ as a sub-von Neumann
algebra of $\R$. Then $D^{it}=\lambda(t)$ is a unitary of $\R$ for all $t\in\Rdb$ 
and 
\begin{equation}\label{MAG}
\sigma^\varphi_t(x) = D^{it}xD^{-it},\qquad t\in\Rdb,\ x\in M.
\end{equation}

Let ${\rm Tr}\colon L^1(M,\varphi)\to \Cdb$ be defined by
${\rm Tr}(h_\psi)=\psi(1)$ for all $\psi\in M_*$.
This functional has two remarkable properties. First, 
for all $x\in M$ and all $\psi\in M_*$, we have
\begin{equation}\label{duality}
{\rm Tr}(h_\psi x) = \psi(x).
\end{equation}
Second if $1\leq p,q\leq \infty$ are such that $p^{-1}+q^{-1}
=1$, then for all $x\in L^p(M,\varphi)$ and all
$y\in L^q(M,\varphi)$, we have
$$
{\rm Tr}(xy) = {\rm Tr}(yx).
$$
This tracial property 
will be used without any further comment in the paper.

It follows from the definition of $\norm{\,\cdotp}_1$ 
and (\ref{duality}) that the duality pairing 
$\langle x,y\rangle ={\rm Tr}(xy)$ for $x\in M$ and $y\in L^1(M,\varphi)$
yields an isometric isomorphism
\begin{equation}\label{1Dual-L1}
L^1(M,\varphi)^*\simeq M.
\end{equation}

As a special case of (\ref{duality}), we have
\begin{equation}\label{1Dual-L1-1}
\varphi(x)={\rm Tr}(Dx),\qquad x\in M.
\end{equation}
We note that $L^2(M,\varphi)$ is a  space for the inner product
$(x\vert y)={\rm Tr}(y^*x)$. Moreover by 
(\ref{1Dual-L1-1}), we have 
\begin{equation}\label{L2}
\varphi(x^*x)=\norm{xD^\frac12}_2^2\quad\hbox{and}\quad 
\varphi(xx^*)=\norm{D^\frac12 x}_2^2,\qquad x\in M.
\end{equation}

We finally mention a useful tool.
Let $M_a\subset M$ be the subset of
all $x\in M$ such that $t\mapsto \sigma^\varphi_t(x)$ extends to an entire
function $z\in\Cdb\mapsto \sigma_z^\varphi(x) \in M$. (Such
elements are called analytic).
It is well-known that $M_a$ is a $w^*$-dense $*$-sub-algebra of $M$ \cite[Section VIII.2]{T2}. Furthermore, 
\begin{equation}\label{MAG2}
\sigma_{i\theta}(x)=D^{-\theta}xD^{\theta},
\end{equation}
for all $x\in M_a$ and all
$\theta\in[0,1]$, 
and $M_aD^{\frac{1}{p}} = D^{\frac{1}{p}}M_a$ is dense in 
$L^p(M,\varphi)$, for all $1\leq p<\infty$. See 
\cite[Lemma 1.1]{JX1} and its proof
for these properties.

\smallskip
\subsection{Extension of maps $M\to M$}\label{E2}
Given any linear map $T\colon M\to M$,
we say that $T$ is positive if
$T(M^+)\subset M^+$. This implies that $T$ is bounded.
For any $n\geq 1$, we say that $T$ is 
$n$-positive if the tensor extension  map $I_{M_n}\otimes T\colon M_n\overline{\otimes} M
\to M_n\overline{\otimes}M$ is positive. (Here $M_n$ is the algebra
of $n\times n$ matrices.) Next, we say that $T$ is completely positive if
$T$ is $n$-positive for all $n\geq 1$. See e.g. \cite{Pa} for basics on these notions.

Consider any $\theta\in[0,1]$ and $1\leq p<\infty$. 
It follows from Lemma 
\ref{Holder} that $D^{\frac{1-\theta}{p}} x 
D^{\frac{\theta}{p}}$ belongs to $L^p(M,\varphi)$ for all $x\in M$.
We set
\begin{equation}\label{A}
\A_{p,\theta} = D^{\frac{(1-\theta)}{p}} M D^{\frac{\theta}{p}}
\subset L^p(M,\varphi).
\end{equation}
It turns out that this is a dense subspace, see \cite[Lemma 1.1]{JX1}.

Let $T\colon M\to M$ be any bounded linear map.
For any $(p,\theta)$ as above, 
define a linear map $T_{p,\theta}\colon \A_{p,\theta}\to \A_{p,\theta}$ by
(\ref{T}).
The question we consider in this paper is whether $T_{p,\theta}$ 
extends to a bounded map $L^p(M,\varphi)\to L^p(M,\varphi)$ 
in the case when $T$ is
$2$-positive and $\varphi\circ T\leq \varphi$
on $M_+$. More precisely, we consider the following:

\begin{question}\label{Q}
Determine the pairs 
$(p,\theta)\in[1,\infty)\times[0,1]$ such that 
$$
T_{p,\theta}\colon L^p(M,\varphi)\longrightarrow L^p(M,\varphi)
$$
is bounded for all $(M,\varphi)$ as above and all
$2$-positive maps $T\colon M\to M$ satisfying
$\varphi\circ T\leq \varphi$ on $M_+$.
\end{question}

As in the introduction, we could consider maps such that
$\varphi\circ T\leq C_1\varphi$ for some $C_1\geq 0$. However 
by an obvious scaling, there is no loss in considering $C_1=1$ only.

\begin{remark}\label{Known}
Question \ref{Q} originates from the Haagerup-Junge-Xu paper
\cite{HJX}. In Section 5 of the latter paper, the authors 
consider two von Neumann algebras $M,N$, and normal faithful states
$\varphi\in M_*$ and $\psi\in N_*$ with respective densities 
$D_\varphi\in L^1(M,\varphi)$ and $D_\psi\in L^1(N,\psi)$. Then they consider 
a positive map
$T\colon M\to N$
such that $\psi\circ T\leq C_1\varphi$ for some
$C_1>0$. Given any $(p,\theta)\in[1,\infty)\times[0,1]$, they
define $T_{p,\theta}\colon D_\varphi^{\frac{1-\theta}{p}} M D_\varphi^{\frac{\theta}{p}}\to L^p(N,\psi)$ by
$$
T_{p,\theta}\Bigl(D_\varphi^{\frac{1-\theta}{p}} x 
D_\varphi^{\frac{\theta}{p}}\Bigr) = D_\psi^{\frac{1-\theta}{p}} T(x) D_\psi^{\frac{\theta}{p}},\qquad x\in M.
$$
In \cite[Theorem 5.1]{HJX},
they show that $T_{p,\frac12}$ is bounded and that setting 
$C_\infty=\norm{T}$, we have
$\norm{T_{p,\frac12}\colon L^p(M,\varphi)\to L^p(N,\psi)}
\leq C_{\infty}^{1-\frac{1}{p}} C_{1}^{\frac{1}{p}}$.
Then after the statement of \cite[Proposition 5.4]{HJX}, they mention that 
the boundedness of $T_{p,\theta}$ for $\theta\not=\frac12$ is an open
question.
\end{remark}

\begin{remark}\label{CondExp}
We wish to point out a special case which will be used in Section \ref{TP}.
Let $B$ be a von Neumman algebra equipped with a normal 
faithful state $\psi$. Let $A\subset B$ be a sub-von Neumann algebra
which is stable under the modular automorphism group of $\psi$ (i.e.
$\sigma^\psi_t(A)\subset A$ for all $t\in\Rdb$). Let $\varphi=\psi_{\vert A}$
be the restriction of $\psi$ to $A$. Let $D\in L^1(A,\varphi)$ and
$\Delta\in L^1(B,\psi)$ be the densities of $\varphi$ and
$\psi$, respectively. On the one hand, 
it follows from \cite[Theorem 5.1]{HJX} (see Remark \ref{Known}) that there exists, for every $1\leq p<\infty$, a contraction
$$
\Lambda(p)\colon L^p(A,\varphi)\longrightarrow L^p(B,\psi)
$$
such that $[\Lambda(p)](D^{\frac{1}{2p}}xD^{\frac{1}{2p}}) =
\Delta^{\frac{1}{2p}}x \Delta^{\frac{1}{2p}}$ for all $x\in A$. 

On the other hand,  there exists a 
unique normal conditional expectation $E\colon B\to A$ 
such that $\psi=\varphi\circ E$ on $B$, by  \cite[Theorem IX.4.2]{T2}.
Moreover it is easy to check 
that under the natural identifications $L^1(A,\varphi)^*\simeq A$ and 
$L^1(B,\psi)^*\simeq B$, see (\ref{1Dual-L1})
and the discussion preceding it, we have
$$
\Lambda(1)^*=E. 
$$

Now using \cite[Theorem 5.1]{HJX} again, there exists, 
for every $1\leq p<\infty$, a contraction
$E(p)\colon L^p(B,\psi)\to L^p(A,\varphi)$ such that 
$[E(p)](\Delta^{\frac{1}{2p}}y \Delta^{\frac{1}{2p}}) =
D^{\frac{1}{2p}}E(y)D^{\frac{1}{2p}}$ for all $y\in B$. 
It is clear that $E(p)\circ \Lambda(p)=I_{L^p(A,\varphi)}$.
Consequently, $\Lambda(p)$ is an isometry.

We refer to \cite[Section 2]{JX1} for more on this.
\end{remark}

\begin{remark}\label{Tracial}
Let $T\colon M\to M$ be a positive map and let $\varphi,D$ as 
in Subsection \ref{E1}.
Assume that $\varphi$ is tracial and for any $1\leq p<\infty$,
let ${\mathcal L}^p(M,\varphi)$
be the (classical) non-commutative $L^p$-space with respect to the trace
$\varphi$ \cite[Section 4.3]{Hiai}. That is,
${\mathcal L}^p(M,\varphi)$ is the completion of $M$ for the norm
$$
\norm{x}_{{\mathcal L}^p(M,\varphi)} = \bigl(\varphi(\vert x\vert^p)\bigr)^{\frac{1}{p}},\qquad x\in M.
$$
In this case, $D$ commutes with $M$ and 
$$
\norm{D^{\frac{1}{p}} x}_{L^p(M,\varphi)} = 
\norm{x}_{{\mathcal L}^p(M,\varphi)},\qquad x\in M,
$$
see e.g. \cite[Example 9.11]{Hiai}. Hence, 
$T_{p,\theta}=T_{p,0}$ for all $1\leq p<\infty$ and all $\theta\in [0,1]$ and moreover,  
$T_{p,0}$ is bounded if and only if $T$ extends to a bounded map
${\mathcal L}^p(M,\varphi)\to {\mathcal L}^p(M,\varphi)$. Thus, in the 
tracial case,
the fact that $T_{p,0}$ is bounded under 
the assumption $\varphi\circ T\leq C_1\varphi$ 
is equivalent to the result mentionned in the first paragraph of
Section \ref{Intro}, see (\ref{TracialExtension}).
\end{remark}

\section{Computing $\norm{T_{p,\theta}}$ 
on semifinite von Neumann algebras}\label{SF}
As in the previous section, we let $M$ be a von Neumann algebra
equipped with a normal faithful state $\varphi$ and we let 
$D\in L^1(M,\varphi)^+$ be the density of $\varphi$. We assume further that
$M$ is semifinite and we let $\tau$ be a distinguished normal
semifinite faithful trace on $M$. For any $1\leq p\leq \infty$, we let 
${\mathcal L}^p(M,\tau)$ be the non-commutative $L^p$-space 
with respect to $\tau$. Although ${\mathcal L}^p(M,\tau)$ is isometrically 
isomorphic to the Haagerup
$L^p$-space $L^p(M,\tau)$,
it is necessary for our purpose to consider ${\mathcal L}^p(M,\tau)$
as such.

Let us give a brief account, for which we refer e.g. 
to \cite[Section 4.3]{Hiai}.
Let ${\mathcal L}^0(M,\tau)$ be the $*$-algebra of all $\tau$-measurable 
operators on $M$. 
For any $p<\infty$, ${\mathcal L}^p(M,\tau)$ is the Banach space
of all $x\in {\mathcal L}^0(M,\tau)$ such that $\tau(\vert x\vert^p)<\infty$, 
equipped with the norm
$$
\norm{x}_{{\mathcal L}^p(M,\tau)}=\bigl(\tau(\vert x\vert^p)\bigr)^{\frac{1}{p}},
\qquad x\in {\mathcal L}^p(M,\tau).
$$
Moreover ${\mathcal L}^\infty(M,\tau)=M$. The following analogue of
Lemma \ref{Holder} holds true: whenever
$1\leq p,q,r\leq\infty$ are such that $p^{-1}+q^{-1}=r^{-1}$,
then  for all 
$x\in {\mathcal L}^p(M,\tau)$ and $y\in {\mathcal L}^q(M,\tau)$,
$xy$ belongs to ${\mathcal L}^r(M,\tau)$, with
$\norm{xy}_r\leq\norm{x}_p\norm{x}_q$ (H\"older's inequality). 
Furthermore, we have 
an isometric identification
\begin{equation}\label{Dual-L1}
{\mathcal L}^1(M,\tau)^*\simeq M
\end{equation}
for the duality pairing given by $\langle x,y\rangle=\tau(yx)$ for all
$x\in M$ and $y\in {\mathcal L}^1(M,\tau)$.

Let $\gamma\in {\mathcal L}^1(M,\tau)$ be associated
with $\varphi$ in the identification (\ref{Dual-L1}), that is,
\begin{equation}\label{Dual-L1-1}
\varphi(x)=\tau(\gamma x),\qquad x\in M.
\end{equation}
Then $\gamma$ is positive and it is clear from H\"older's inequality that for any $1\leq p<\infty$,
$\theta\in[0,1]$ and $x\in M$, the product 
$\gamma^{\frac{1-\theta}{p}} x 
\gamma^{\frac{\theta}{p}}$ belongs to 
${\mathcal L}^p(M,\tau)$.

It is well-known that ${\mathcal L}^p(M,\tau)$ and $L^p(M,\varphi)$
are isometrically isomorphic (apply Remark 9.10 and Example 9.11 
in \cite{Hiai}). The following lemma
provides concrete isometric isomorphisms between these two spaces.

\begin{lemma}\label{D-Gamma}
Let $1\leq p<\infty$ and $\theta\in[0,1]$. Then for all $x\in M$, we have
$$
\bignorm{\gamma^{\frac{1-\theta}{p}} x 
\gamma^{\frac{\theta}{p}}}_{{\mathcal L}^p(M,\tau)} = 
\bignorm{D^{\frac{1-\theta}{p}} x 
D^{\frac{\theta}{p}}}_{L^p(M,\varphi)}.
$$
\end{lemma}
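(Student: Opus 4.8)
The goal is to identify $L^p(M,\varphi)$ with ${\mathcal L}^p(M,\tau)$ in a way that carries $D^{\frac1p}$ to $\gamma^{\frac1p}$, and then track where the subspace $\A_{p,\theta}$ goes. The natural device is the isometric $*$-isomorphism $\kappa$ from $L^p(M,\varphi)$ onto ${\mathcal L}^p(M,\tau)$ that one gets from the standard identification of Haagerup and tracial $L^p$-spaces over a semifinite $M$ (Hiai, Remark 9.10 and Example 9.11). Concretely, the density $D\in L^1(M,\varphi)^+$ and the element $\gamma\in{\mathcal L}^1(M,\tau)^+$ both implement $\varphi$: we have $\varphi(x)={\rm Tr}(Dx)$ by (\ref{1Dual-L1-1}) and $\varphi(x)=\tau(\gamma x)$ by (\ref{Dual-L1-1}). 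The first thing I would do is make precise that $\kappa$ is the unique isometric isomorphism $L^p(M,\varphi)\to{\mathcal L}^p(M,\tau)$ which restricts to the identity on $M=L^\infty=\mathcal L^\infty$ (at the level of bounded operators) and which intertwines left and right multiplication by $M$; such a $\kappa$ exists and is multiplicative on products that land in a common $L^r$. The point is that $\kappa$ is a module map over $M$ on both sides.

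Next I would show $\kappa(D^{\frac1p})=\gamma^{\frac1p}$. This follows from the $p=1$ case $\kappa_1(D)=\gamma$, which is exactly the statement that $D$ and $\gamma$ represent $\varphi$ under the two dualities (\ref{1Dual-L1}) and (\ref{Dual-L1}): for $x\in M$, ${\rm Tr}(Dx)=\varphi(x)=\tau(\gamma x)$, and since $\kappa_1$ is the canonical isomorphism $L^1(M,\varphi)\cong{\mathcal L}^1(M,\tau)$ compatible with the $M$-duality, it sends $D$ to $\gamma$. Then for general $p$ one uses that $D^{\frac1p}$ is the unique positive $p$-th root of $D$ inside the measurable-operator algebra and that $\kappa$ is a $*$-homomorphism compatible with functional calculus on the common larger algebra $L^0$; hence $\kappa(D^{\frac1p})^p=\kappa(D)=\gamma$ forces $\kappa(D^{\frac1p})=\gamma^{\frac1p}$, and likewise $\kappa(D^{s})=\gamma^s$ for every $s\in[0,1/p]$ by the same root argument applied to $D^{ps}$. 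In particular $\kappa(D^{\frac{1-\theta}{p}})=\gamma^{\frac{1-\theta}{p}}$ and $\kappa(D^{\frac{\theta}{p}})=\gamma^{\frac{\theta}{p}}$.

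Finally, for $x\in M$ I would write the element $D^{\frac{1-\theta}{p}} x D^{\frac{\theta}{p}}$ as a product of three factors lying in $L^{p/(1-\theta)}$, $L^\infty$, $L^{p/\theta}$ respectively (using Lemma \ref{Holder}), and similarly on the tracial side with $\gamma$ in place of $D$. Since $\kappa$ is an $M$-bimodule map and respects the Hölder products (it is, after all, an isometric isomorphism of the ambient algebras of measurable operators, hence multiplicative), one gets
\[
\kappa\bigl(D^{\frac{1-\theta}{p}} x D^{\frac{\theta}{p}}\bigr)
= \kappa\bigl(D^{\frac{1-\theta}{p}}\bigr)\, x\, \kappa\bigl(D^{\frac{\theta}{p}}\bigr)
= \gamma^{\frac{1-\theta}{p}} x \gamma^{\frac{\theta}{p}}.
\]
Because $\kappa$ is isometric from $L^p(M,\varphi)$ onto ${\mathcal L}^p(M,\tau)$, the claimed equality of norms follows at once. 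The main obstacle, and the one place requiring care, is the first step: pinning down the isomorphism $\kappa$ precisely enough — as a restriction/extension of a single $*$-isomorphism of measurable-operator algebras that is simultaneously isometric on each $L^p$ and the identity on $M$ — so that all the "$\kappa$ commutes with products, adjoints, and continuous functional calculus" manipulations are legitimate; once that identification is in hand, the rest is the bookkeeping above. Alternatively, to avoid invoking the abstract identification, one can argue more hands-on: both sides equal $\varphi(|x|^p)^{1/p}$-type quantities only when $\theta=0$ or $x$ is analytic, so instead I would reduce, via (\ref{MAG2}) and density of $M_a$, to comparing $\|D^{\frac1p}\sigma^\varphi_{i\theta/p}(x)^{\sim}\cdots\|$; but the clean route is the isomorphism-of-algebras one, and that is what I would present.
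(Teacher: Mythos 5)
Your plan hinges on a map $\kappa$ that does not exist in the form you need. You ask for a single $*$-isomorphism of the ambient algebras of measurable operators which is the identity on $M$, isometric from each $L^p(M,\varphi)$ onto ${\mathcal L}^p(M,\tau)$, and compatible with products and functional calculus, and you then derive multiplicativity "since it is an isometric isomorphism of the ambient algebras". But the Haagerup spaces live inside $L^0(\R,\tau_0)$ for the crossed product $\R=M\rtimes_{\sigma^\varphi}\Rdb$, which is strictly larger than $M$, and the spaces $L^p(M,\varphi)$ for distinct $p$ are eigenspaces of the dual action for distinct eigenvalues, so they intersect only in $\{0\}$; the tracial spaces ${\mathcal L}^p(M,\tau)$ overlap heavily. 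Moreover a $*$-isomorphism respecting functional calculus would have to send the unitary $D^{it}=\lambda(t)$, which lies in $\R$ but not in $M$, to $\gamma^{it}\in M$, and this is incompatible with being injective and the identity on $M$. What Hiai's Remark 9.10/Example 9.11 actually provide is a separate isometry for each fixed $p$ (of the shape $x\gamma^{1/p}\mapsto xD^{1/p}$); the assertion that these $p$-dependent isometries are "jointly multiplicative", i.e.\ that the one for $p$ also matches the $\theta$-weighted embeddings $\gamma^{\frac{1-\theta}{p}}x\gamma^{\frac{\theta}{p}}\leftrightarrow D^{\frac{1-\theta}{p}}xD^{\frac{\theta}{p}}$ for every $\theta$, is essentially the statement of the lemma. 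So the key step of your argument is circular. (The gap is repairable by a concrete computation: when $\sigma^\varphi$ is inner one has $\R\cong M\overline{\otimes}L^\infty(\Rdb)$ with $D$ corresponding to $\gamma\otimes e^s$, and then $D^{\frac{1-\theta}{p}}xD^{\frac{\theta}{p}}$ corresponds to $\gamma^{\frac{1-\theta}{p}}x\gamma^{\frac{\theta}{p}}\otimes e^{s/p}$ because $e^s$ commutes with $M\otimes 1$ — but none of this is in your write-up.)

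The route you dismiss at the end is, in substance, the one the paper takes. The paper first proves the case $p=1$: for $\theta\in\{0,1\}$ the two norms agree by testing against the unit ball of $M$ via the dualities (\ref{1Dual-L1}) and (\ref{Dual-L1}) (both pairings compute $\varphi(xx')$); for general $\theta$ and analytic $x$ one uses that $\sigma^\varphi_{i\theta}(x)$ equals both $D^{-\theta}xD^{\theta}$ and $\gamma^{-\theta}x\gamma^{\theta}$, so that both sides reduce to the endpoint case applied to $\sigma^\varphi_{i\theta}(x)$; then one removes the analyticity assumption by strong approximation and H\"older. This yields an isometric isomorphism $\Phi\colon L^1(M,\varphi)\to{\mathcal L}^1(M,\tau)$ sending $D^{1-\theta}xD^{\theta}$ to $\gamma^{1-\theta}x\gamma^{\theta}$. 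The passage to general $p$ is then done by Kosaki's theorem: $\Phi$ intertwines the interpolation couples $(J_\theta(M),L^1(M,\varphi))$ and $(j_\theta(M),{\mathcal L}^1(M,\tau))$, hence restricts to an isometry between the interpolation spaces $C(p,\theta)$ and $c(p,\theta)$, which Kosaki identifies with $L^p(M,\varphi)$ and ${\mathcal L}^p(M,\tau)$ respectively. You should either carry out this two-step argument or supply the concrete crossed-product identification; as written, the proposal has a genuine gap at its central step.
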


Before giving the proof of this lemma, we recall a classical tool.
For any $\theta\in[0,1]$, define an
embedding $J_\theta\colon M\to 
L^1(M,\varphi)$ by letting 
$$
J_\theta(x)=D^{1-\theta}xD^\theta,\qquad x\in M.
$$
Consider $(J_\theta(M), L^1(M,\varphi))$ 
as an interpolation couple, the norm on $J_\theta(M)$ being given by the 
norm on $M$, that is, 
\begin{equation}\label{NormM1}
\bignorm{D^{1-\theta}xD^\theta}_{J_\theta(M)} = \norm{x}_M,\qquad x\in M.
\end{equation}
For any $1\leq p\leq\infty$,
let 
\begin{equation}\label{Cp}
C(p,\theta)=\bigl[J_\theta(M), L^1(M,\varphi)\bigr]_{\frac{1}{p}}
\end{equation}
be the resulting interpolation space 
provided by the complex interpolation
method \cite[Chapter 4]{BL}. Regard $C(p,\theta)$ as a
subspace of $L^1(M,\varphi)$ in the natural way. 
Then Kosaki's theorem \cite[Theorem 9.1]{Ko} (see also \cite[Theorem 9.36]{Hiai}) asserts that 
$C(p,\theta)$ is equal to $D^{\frac{1-\theta}{p'}}L^{p}(M,\varphi)D^{\frac{\theta}{p'}}$ and that
\begin{equation}\label{Ko1}
\bignorm{D^{\frac{1-\theta}{p'}}y
D^{\frac{\theta}{p'}}}_{C(p,\theta)}
=\norm{y}_{L^p(M,\varphi)},\qquad y\in L^p(M,\varphi).
\end{equation}
Here $p'$ is the conjugate index of $p$, so that $D^{\frac{1-\theta}{p'}}y
D^{\frac{\theta}{p'}}$ belongs to $L^1(M,\varphi)$ provided that $y$ belongs to $L^p(M,\varphi)$.

Likewise, let $j_\theta\colon M\to 
{\mathcal L}^1(M,\tau)$ be defined
by $j_\theta(x)=\gamma^{1-\theta}x\gamma^\theta$ for all $x\in M$. 
Consider $(j_\theta(M), {\mathcal L}^1(M,\tau))$ as an 
interpolation couple, the norm on $j_\theta(M)$ being given by the 
norm on $M$, and set
\begin{equation}\label{cp}
c(p,\theta)=[j_\theta(M), {\mathcal L}^1(M,\tau)]_{\frac{1}{p}},
\end{equation}
regarded as a
subspace of ${\mathcal L}^1(M,\tau)$. 
Then arguing as in the proof of \cite[Theorem 9.1]{Ko}, one obtains that 
$c(p,\theta)$ is equal to $\gamma^{\frac{1-\theta}{p'}}
{\mathcal L}^{p}(M,\tau)\gamma^{\frac{\theta}{p'}}$ 
and that
\begin{equation}\label{Ko2}
\bignorm{\gamma^{\frac{1-\theta}{p'}}y
\gamma^{\frac{\theta}{p'}}}_{c(p,\theta)}
=\norm{y}_{{\mathcal L}^{p}(M,\tau)},\qquad y\in {\mathcal L}^{p}(M,\tau).
\end{equation}

\smallskip

\begin{proof}[Proof of Lemma \ref{D-Gamma}.] 
We fix some $\theta\in[0,1]$.
We start with the case $p=1$. Let $x\in M$. For any $x'\in M$, we have 
$\tau(\gamma xx')={\rm Tr}(Dxx')$ and hence 
$\vert \tau(\gamma xx')\vert =\vert{\rm Tr}(Dxx')\vert$, by (\ref{1Dual-L1-1}) and 
(\ref{Dual-L1-1}). Taking the supremum 
over all $x'\in M$ with $\norm{x'}_M\leq 1$, it therefore follows 
from 
(\ref{1Dual-L1}) and (\ref{Dual-L1}) that 
\begin{equation}\label{Prelim}
\bignorm{\gamma x}_{{\mathcal L}^1(M,\tau)} = 
\bignorm{Dx}_{L^1(M,\varphi)},\qquad x\in M.
\end{equation}
Now assume that  $x\in M_a$ (the space of analytic elements of $M$). According to (\ref{MAG2}), we have
$D\sigma^\varphi_{i\theta}(x)=D^{1-\theta}xD^{\theta}$.  
Likewise, $\sigma^\varphi_t(x)=\gamma^{it}x\gamma^{-it}$
for all $t\in \Rdb$, by \cite[Theorem VIII.2.11]{T2}, hence $\sigma^\varphi_{i\theta}(x)=\gamma^{-\theta}x\gamma^\theta$. Hence 
we have $\gamma\sigma^\varphi_{i\theta}(x)=\gamma^{1-\theta}x\gamma^{\theta}$.
Applying (\ref{Prelim}) with 
$\sigma^\varphi_{i\theta}(x)$ 
in place of $x$, we deduce that
\begin{equation}\label{p1}
\bignorm{\gamma^{(1-\theta)} x 
\gamma^{\theta}}_{{\mathcal L}^1(M,\tau)} = 
\bignorm{D^{(1-\theta)} x 
D^{\theta}}_{L^1(M,\varphi)}.
\end{equation}
Consider 
the standard representation $M\hookrightarrow B(L^2(M,\varphi))$
and consider an arbitrary $x\in M$. Assume that $\theta\geq\frac12$.
There exists a net
$(x_i)_i$ in $M_a$ such that $x_i\to x$ strongly. Then
$x_iD^\frac12\to xD^\frac12$ in $L^2(M,\varphi)$. Applying Lemma \ref{Holder} (H\"older's inequality), we deduce that
$D^{1-\theta}x_iD^{\theta}=D^{1-\theta}(x_iD^\frac12)D^{\theta-\frac12}$ converges to 
$D^{1-\theta} xD^{\theta}$ in $L^{1}(M,\varphi)$. (This result can also be formally deduced from \cite[Lemma 2.3]{J}.)
Likewise,  
$\gamma^{1-\theta}x_i\gamma^{\theta}$ converges to 
$\gamma^{1-\theta} x\gamma^{\theta}$ in ${\mathcal L}^{1}(M,\tau)$. Consequently,
(\ref{p1}) holds true for $x$. Changing $x$ into $x^*$, we obtain this result 
as well if  $\theta<\frac12$.
This proves the result when $p=1$.

We further note that the proof that 
$\A_{1,\theta}=D^{(1-\theta)} M 
D^{\theta}$ is dense in $L^1(M,\varphi)$ shows as well that the space 
$\gamma^{1-\theta} M 
\gamma^{\theta}$ is dense in ${\mathcal L}^1(M,\tau)$. Thus,
(\ref{p1}) provides an isometric isomorphism
$$
\Phi\colon L^1(M,\varphi)\longrightarrow {\mathcal L}^1(M,\tau)
$$
such that 
$$
\Phi\bigl(D^{1-\theta} x 
D^{\theta}\bigr) = \gamma^{1-\theta} x \gamma^{\theta},\qquad x\in M.
$$

Now let $p>1$ and consider the interpolation spaces
$C(p,\theta)$ and $c(p,\theta)$ defined by (\ref{Cp}) and (\ref{cp}). Since 
$j_\theta=\Phi\circ J_\theta$, the mapping
$\Phi$ restricts to an isometric isomorphism from $C(p,\theta)$ onto $c(p,\theta)$.
Let $x\in M$. Applying (\ref{Ko2}) and
(\ref{Ko1}), we deduce that
\begin{align*}
\bignorm{\gamma^{\frac{1-\theta}{p}} x 
\gamma^{\frac{\theta}{p}}}_{{\mathcal L}^p(M,\tau)} & =
\bignorm{\gamma^{1-\theta}x\gamma^\theta}_{c(p,\theta)}\\
& =
\bignorm{D^{1-\theta}x D^\theta}_{C(p,\theta)}\\
&= \bignorm{D^{\frac{1-\theta}{p}} x 
D^{\frac{\theta}{p}}}_{L^p(M,\varphi)},
\end{align*}
which proves the result.
\end{proof}

The following is a straightforward consequence 
of Lemma \ref{D-Gamma}. Given any $T\colon M\to M$, it
provides a concrete way to compute the norm of the operator $T_{p,\theta}$ associated with $\varphi$.
Note that in this statement, this norm may be infinite.

\begin{corollary}
Let $1\leq p<\infty$, let $\theta\in[0,1]$ and let
$T\colon M\to M$ be any bounded map. Then 
$$
\norm{T_{p,\theta}} = \sup\Bigl\{
\bignorm{\gamma^{\frac{1-\theta}{p}}T(x)
\gamma^{\frac{\theta}{p}}}_{p}\, :\,
x\in M,\ \bignorm{\gamma^{\frac{1-\theta}{p}} x 
\gamma^{\frac{\theta}{p}}}_{p}\leq 1\Bigr\}.
$$
\end{corollary}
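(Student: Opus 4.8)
The plan is to deduce this Corollary directly from Lemma \ref{D-Gamma} with essentially no new argument, the point being simply to translate the defining supremum for $\norm{T_{p,\theta}}$ from the Haagerup picture into the semifinite picture via the isometry provided by that lemma. First I would recall that, by definition, $T_{p,\theta}\colon \A_{p,\theta}\to \A_{p,\theta}$ is the map sending $D^{\frac{1-\theta}{p}} x D^{\frac{\theta}{p}}$ to $D^{\frac{1-\theta}{p}} T(x) D^{\frac{\theta}{p}}$, and since $\A_{p,\theta}$ is dense in $L^p(M,\varphi)$, the operator norm of its extension (finite or not) is the supremum of $\bignorm{D^{\frac{1-\theta}{p}} T(x) D^{\frac{\theta}{p}}}_{L^p(M,\varphi)}$ over all $x\in M$ with $\bignorm{D^{\frac{1-\theta}{p}} x D^{\frac{\theta}{p}}}_{L^p(M,\varphi)}\leq 1$.

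Next I would invoke Lemma \ref{D-Gamma} twice: once applied to $T(x)$ to rewrite the numerator, giving
\[
\bignorm{D^{\frac{1-\theta}{p}} T(x) D^{\frac{\theta}{p}}}_{L^p(M,\varphi)}
=\bignorm{\gamma^{\frac{1-\theta}{p}} T(x) \gamma^{\frac{\theta}{p}}}_{{\mathcal L}^p(M,\tau)},
\]
and once applied to $x$ itself to see that the constraint $\bignorm{D^{\frac{1-\theta}{p}} x D^{\frac{\theta}{p}}}_{L^p(M,\varphi)}\leq 1$ is exactly the constraint $\bignorm{\gamma^{\frac{1-\theta}{p}} x \gamma^{\frac{\theta}{p}}}_{{\mathcal L}^p(M,\tau)}\leq 1$. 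Substituting both identities into the supremum yields precisely the claimed formula. One small point I would make explicit: since $\A_{p,\theta}$ is dense, the bounded-extension norm of $T_{p,\theta}$ agrees with this supremum over the dense subspace in all cases, with the usual convention that the supremum is $+\infty$ when no bounded extension exists; thus the formula is valid even when $\norm{T_{p,\theta}}=\infty$, as the statement allows.

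There is essentially no obstacle here — the content is entirely contained in Lemma \ref{D-Gamma}, and the proof is a two-line unwinding of definitions. If anything needs a word of care, it is only the bookkeeping that the map $x\mapsto D^{\frac{1-\theta}{p}} x D^{\frac{\theta}{p}}$ is surjective onto a dense subspace, so that ranging $x$ over all of $M$ really does produce the full unit ball of $L^p(M,\varphi)$ in the limit; this is already recorded in the discussion around \eqref{A} and in \cite[Lemma 1.1]{JX1}.
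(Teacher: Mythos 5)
Your proposal is correct and follows exactly the route the paper intends: the paper gives no proof, simply declaring the corollary a straightforward consequence of Lemma \ref{D-Gamma}, and your two applications of that lemma (to the image $T(x)$ and to the constraint on $x$), together with the density of $\A_{p,\theta}$ and the convention that the norm may be infinite, are precisely the intended unwinding.
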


Let $n\geq 1$ be an integer and consider the special 
case when $M=M_n$, equipped with its usual 
trace ${\rm tr}$. For any $\varphi$
and $T\colon M_n\to M_n$ as above,
$T_{p,\theta}$ is trivially bounded for all
$1\leq p<\infty$ and $\theta$ since 
$L^p(M_n,\varphi)$ is finite dimensional. However 
we will see in Sections \ref{TP} and \ref{No} that
finding (lower) estimates of the norm 
of $T_{p,\theta}$ in this setting will be
instrumental to devise 
counter-examples on infinite dimensional von Neumann algebras. This is why we give a version of the preceding corollary in this specific case.

For any $1\leq p<\infty$, 
let $S^p_n={\mathcal L}^p(M_n,{\rm tr})$ denote the
$p$-Schatten class over $M_n$.

\begin{proposition}\label{Transfer}
Let $\Gamma\in M_n$ be a positive definite matrix such that ${\rm tr}(\Gamma)=1$ and let $\varphi$ be the faithful state 
on $M_n$ associated with $\Gamma$, that is,
$\varphi(X)={\rm tr}(\Gamma X)$
for all $X\in M_n$.
Let  $T\colon M_n\to M_n$ be any linear map. For any $p\in[1,\infty)$ and $\theta\in[0,1]$, let 
$U_{p,\theta}\colon S^p_n\to S^p_n$ be defined by
\begin{equation}\label{Up}
U_{p,\theta}(Y) = 
\Gamma^{\frac{1-\theta}{p}} T\bigl(\Gamma^{-\frac{1-\theta}{p}}
Y\Gamma^{-\frac{\theta}{p}}\bigr)\Gamma^{\frac{\theta}{p}},
\qquad Y\in S^p_n.
\end{equation}
Then 
$$
\bignorm{T_{p,\theta}\colon L^p(M_n,\varphi)\longrightarrow L^p(M_n,\varphi)} =
\bignorm{U_{p,\theta}\colon S^p_n\longrightarrow S^p_n}.
$$
\end{proposition}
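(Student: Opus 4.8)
The plan is to reduce the statement to the corollary to Lemma~\ref{D-Gamma} (which already computes $\norm{T_{p,\theta}}$ as a supremum over the unit ball of a weighted version of $T$), together with a single change of variables.

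First I would identify the trace density. Since $M_n$ equipped with the usual trace ${\rm tr}$ is (trivially) a semifinite von Neumann algebra and $\varphi(X)={\rm tr}(\Gamma X)$ defines a normal faithful state, the setting of Section~\ref{SF} applies, with ${\mathcal L}^p(M_n,{\rm tr})=S^p_n$. The element $\gamma\in{\mathcal L}^1(M_n,{\rm tr})$ associated with $\varphi$ through (\ref{Dual-L1}) is characterized by (\ref{Dual-L1-1}), i.e.\ $\varphi(X)={\rm tr}(\gamma X)$ for all $X\in M_n$; comparing with $\varphi(X)={\rm tr}(\Gamma X)$ and using the faithfulness of ${\rm tr}$, one gets $\gamma=\Gamma$. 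Hence the corollary to Lemma~\ref{D-Gamma} yields
\[
\norm{T_{p,\theta}\colon L^p(M_n,\varphi)\to L^p(M_n,\varphi)}
=\sup\Bigl\{\bignorm{\Gamma^{\frac{1-\theta}{p}}T(x)\Gamma^{\frac{\theta}{p}}}_{S^p_n}\ :\ x\in M_n,\ \bignorm{\Gamma^{\frac{1-\theta}{p}}x\Gamma^{\frac{\theta}{p}}}_{S^p_n}\leq 1\Bigr\}.
\]

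Next I would substitute $Y=\Gamma^{\frac{1-\theta}{p}}x\Gamma^{\frac{\theta}{p}}$. As $\Gamma$ is positive definite, the matrices $\Gamma^{\frac{1-\theta}{p}}$ and $\Gamma^{\frac{\theta}{p}}$ are invertible, so $x\mapsto Y$ is a linear bijection of $M_n=S^p_n$ onto itself, with inverse $x=\Gamma^{-\frac{1-\theta}{p}}Y\Gamma^{-\frac{\theta}{p}}$. Under this substitution the constraint becomes $\norm{Y}_{S^p_n}\leq 1$ and the quantity inside the supremum becomes exactly $U_{p,\theta}(Y)$ as defined in (\ref{Up}). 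Therefore the right-hand side above equals $\sup\{\norm{U_{p,\theta}(Y)}_{S^p_n}:Y\in S^p_n,\ \norm{Y}_{S^p_n}\leq 1\}=\norm{U_{p,\theta}\colon S^p_n\to S^p_n}$, the last equality because $S^p_n$ is finite dimensional, so $U_{p,\theta}$ is bounded and its norm is attained on the closed unit ball.

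There is no real obstacle here: the argument is entirely formal once one is allowed to quote the corollary to Lemma~\ref{D-Gamma}. The only point worth a line of care is the identification $\gamma=\Gamma$, which permits invoking that corollary verbatim, together with the (routine) check that the change of variables turns its weighted map into the map $U_{p,\theta}$ of (\ref{Up}); everything else is manipulation with the invertible weights $\Gamma^{\pm\frac{1-\theta}{p}}$ and $\Gamma^{\pm\frac{\theta}{p}}$.
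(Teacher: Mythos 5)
Your proof is correct and follows exactly the route the paper intends: the proposition is stated as a specialization of the corollary to Lemma \ref{D-Gamma}, and your identification $\gamma=\Gamma$ plus the change of variables $Y=\Gamma^{\frac{1-\theta}{p}}x\Gamma^{\frac{\theta}{p}}$ (a bijection since $\Gamma$ is positive definite) is precisely the argument the authors leave implicit. Nothing is missing.
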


\section{Extension results}\label{Yes} 
This section is devoted to two cases
for which Question \ref{Q} 
has a positive answer.
Let $M$ be a von Neumann algebra equipped with a faithful normal state $\varphi$ and
let $D\in L^1(M,\varphi)^+$ denote its density.

\begin{theorem}\label{p=2} 
Let $T\colon M\to M$ be a $2$-positive map
such that $\varphi\circ T\leq \varphi$. For any $p\geq 2$ and for any 
$\theta\in[0,1]$, the mapping $T_{p,\theta}\colon \A_{p,\theta}\to 
\A_{p,\theta}$ defined by (\ref{T}) extends to a 
bounded map $L^p(M,\varphi)\to L^p(M,\varphi)$.
\end{theorem}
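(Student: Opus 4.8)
The plan is to reduce, by standard density arguments, to estimating $\norm{D^{\frac{1-\theta}{p}}T(x)D^{\frac{\theta}{p}}}_p$ for $x$ analytic, and then to produce this bound by complex interpolation between two endpoint estimates that exploit $2$-positivity. First I would treat the two extreme values $\theta=\frac12$ and $\theta=0$ (equivalently $\theta=1$, by taking adjoints) and then interpolate. The case $\theta=\frac12$ is already available: by \cite[Theorem 5.1]{HJX} the map $T_{p,\frac12}$ is bounded on $L^p(M,\varphi)$ with norm at most $\norm{T}^{1-1/p}$, using $\varphi\circ T\le\varphi$ and $C_\infty=\norm{T}$. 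So the work is concentrated at an endpoint like $\theta=0$, i.e.\ bounding $\norm{D^{\frac1p}T(x)}_p$ in terms of $\norm{D^{\frac1p}x}_p$, and then the theorem for general $\theta\in[0,1]$ and $p\ge2$ will come from interpolating the couple between the $\theta=\frac12$ estimate and (say) the $\theta=0$ estimate — note that every $\theta\in[0,1]$ lies between $0$ and $1$, and Kosaki-type interpolation (as in \eqref{Ko1}, via the spaces $C(p,\theta)$) lets one move the density weights around.

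The heart of the matter is the endpoint estimate, and here $2$-positivity enters through the Kadison--Schwarz inequality: if $T$ is $2$-positive then $T(x)^*T(x)\le \norm{T(1)}\,T(x^*x)$ for all $x\in M$ (and more conveniently, for $T$ with $T(1)\le1$, $T(x)^*T(x)\le T(x^*x)$). Combined with $\varphi\circ T\le\varphi$ this controls the "column" side: using \eqref{L2}, for $x\in M$,
\begin{equation*}
\norm{T(x)D^{\frac12}}_2^2=\varphi\bigl(T(x)^*T(x)\bigr)\le \varphi\bigl(T(x^*x)\bigr)\le \varphi(x^*x)=\norm{xD^{\frac12}}_2^2,
\end{equation*}
so $T_{2,1}$ (and by symmetry $T_{2,0}$) is a contraction on $L^2(M,\varphi)$; together with the trivial $L^\infty=M$ bound $\norm{T}$ this already handles $p=\infty$ and $p=2$ at the endpoints. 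For general $p\ge2$ at the endpoint $\theta=0$ I would interpolate the pair $(M\to M,\ L^2\to L^2)$ using the Kosaki description of $L^p$ as a complex interpolation space between $M$ and $L^1$ (or between $M$ and $L^2$), being careful that the relevant interpolation scale is the one putting the density on the correct side so that the interpolated map is exactly $T_{p,0}$. This gives $\norm{T_{p,0}}\le\norm{T}^{1-2/p}$, and symmetrically for $T_{p,1}$.

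Finally, for a general $\theta\in[0,1]$ and $p\ge2$, I would interpolate between the endpoint estimates just obtained. Concretely, fix $p\ge2$ and view, for $x$ analytic, the analytic function $z\mapsto D^{\frac{1-z}{p}}T(x_z)D^{\frac{z}{p}}$ where $x_z$ is chosen so that the boundary values on $\{{\rm Re}\,z=0\}$ and $\{{\rm Re}\,z=1\}$ reproduce $T_{p,0}$ and $T_{p,1}$ respectively (using the modular flow $\sigma^\varphi_t$ and \eqref{MAG2} to absorb the imaginary parts of the shift into automorphisms of $M$, which are isometric); then the three-lines lemma on $L^p(M,\varphi)$ yields boundedness of $T_{p,\theta}$ with norm at most $\norm{T}^{1-1/p}$ for every $\theta\in[0,1]$. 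The main obstacle I anticipate is purely bookkeeping: making the interpolation argument rigorous requires the density of $\A_{p,\theta}$ (available from \cite[Lemma 1.1]{JX1}), the analytic-element machinery to justify the unitary shifts $D^{is}$, and the Kosaki identification \eqref{Ko1} applied on the correct side — there is no deep inequality beyond Kadison--Schwarz, but one must check that each manipulation genuinely lands $x$ back in $M$ (not merely in some $L^q$), so that $T$ can be applied and $2$-positivity invoked at the one place it is needed.
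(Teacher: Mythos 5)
Your proposal is correct in outline and uses the same three ingredients as the paper's proof --- the Kadison--Schwarz inequality to handle the $L^2$ endpoints $\theta\in\{0,1\}$, Kosaki's complex interpolation theorem to pass from $L^2$ to $L^p$, and a three-lines argument to reach intermediate $\theta$ --- but you perform the two interpolations in the opposite order. The paper first proves $\norm{T_{2,\theta}}\leq\norm{T_{2,0}}^{1-\theta}\norm{T_{2,1}}^{\theta}$ by a three-lines argument carried out entirely inside $L^2(M,\varphi)$, where self-duality makes the boundary estimate a one-line Cauchy--Schwarz, and only then interpolates the couple $({\mathfrak J}_\theta(M),L^2(M,\varphi))$ with parameter $2/p$ for each fixed $\theta$. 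You instead interpolate $(M,L^2)$ first, at $\theta=0$ and $\theta=1$, to get $T_{p,0}$ and $T_{p,1}$ bounded for $p\geq2$, and then run a three-lines/Stein argument in $\theta$ directly on $L^p$ against $L^{p'}$. This can be made to work: the facts you need --- that $\xi\mapsto D^{is}\xi D^{-is}$ is an isometry of $L^p(M,\varphi)$ and that $T_{p,it}$ is conjugate to $T_{p,0}$ by such isometries --- are true. But the boundary bookkeeping is genuinely heavier than at $p=2$, because the test elements live in $L^{p'}$ and carry density exponents built from $1/p'$, which do not pair symmetrically with the exponents $\tfrac{1-\theta}{p},\tfrac{\theta}{p}$ unless $p=2$; identifying $F(\theta)$ with ${\rm Tr}\bigl(T_{p,\theta}(\cdot)\,\cdot\bigr)$ then needs an extra modular rotation of the test element, and one must also justify boundedness of $F$ on the closed strip (restrict to suitably smoothed analytic elements). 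The paper's ordering avoids all of this. Two minor points: the $\theta=\tfrac12$ input from \cite{HJX} that you quote at the start is never actually needed once you have the endpoints $\theta=0,1$; and since $\norm{T}\leq1$ is not assumed, Kadison--Schwarz gives $\norm{T_{2,1}}\leq\norm{T}^{1/2}$ rather than a contraction, so your final constant $\norm{T}^{1-1/p}$ should be rechecked --- this affects only the constant, not the boundedness the theorem asserts.
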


\begin{proof}
Consider a $2$-positive map $T\colon M\to M$ such 
that $\varphi\circ T\leq \varphi$.
We start with the case $p=2$. 
For any
$x\in M$, we have
$$
T(x)^*T(x)\leq \norm{T} T(x^*x),
$$
by the Kadison-Schwarz inequality \cite{Choi}. By (\ref{L2}), we have
$$
\norm{T(x)D^\frac12}_2^2= 
\varphi\bigl(T(x)^*T(x)\bigr)\leq \norm{T}\varphi\bigl(T(x^*x)\bigr)
\leq \norm{T}\varphi(x^*x)=\norm{T}\norm{xD^\frac12}_2^2.
$$
This shows that $T_{2,1}$ is bounded. 
The proof that $T_{2,0}$ is bounded is similar.

Now let $\theta\in(0,1)$ and let us show that $T_{2,\theta}$ is bounded.
Consider the open strip 
$$
\S=\bigl\{z\in\Cdb\, :\,  0<{\rm Re}(z)<1\bigr\}.
$$ 
Let $x,a\in M_a$ and define $F\colon\overline{\S}\to\Cdb$ by
$$
F(z)= {\rm Tr}\Bigl(T
\Bigl(\sigma^{\varphi}_{\frac{i}{2}(1-z)}(x)\Bigr) D^\frac12
\sigma^{\varphi}_{-\frac{iz}{2}}(a) D^\frac12\Bigr).
$$
This is a well-defined function which is actually the restriction
to $\overline{\S}$ of an entire function. For all $t\in\Rdb$, we have
\begin{align*}
F(it) & = {\rm Tr}\Bigl(D^\frac12\,T
\Bigl(\sigma^{\varphi}_{\frac{i}{2}}\bigl(\sigma^{\varphi}_{\frac{t}{2}}(x)
\bigr)
\Bigr) D^\frac12
\sigma^{\varphi}_{\frac{t}{2}}(a)\Bigr)\\
& = 
{\rm Tr}\Bigl(D^\frac12\,T
\Bigl(D^{-\frac12}\sigma^{\varphi}_{\frac{t}{2}}(x) D^\frac12
\Bigr) D^\frac12
\sigma^{\varphi}_{\frac{t}{2}}(a) \Bigr)\\
&=
{\rm Tr}\Bigl(T_{2,0}
\Bigl(\sigma^{\varphi}_{\frac{t}{2}}(x) D^\frac12
\Bigr) 
D^\frac12 \sigma^{\varphi}_{\frac{t}{2}}(a)\Bigr),
\end{align*}
by (\ref{MAG2}). Hence by (\ref{MAG}),
\begin{align*}
\vert F(it)\vert &\leq \Bignorm{T_{2,0}
\Bigl(\sigma^{\varphi}_{\frac{t}{2}}(x)D^\frac12\Bigr)}_2
\Bignorm{D^\frac12 \sigma^{\varphi}_{\frac{t}{2}}(a)}_2\\
&\leq \bignorm{T_{2,0}} 
\bignorm{D^{\frac{it}{2}}(xD^\frac12)D^{-\frac{it}{2}}}_2
\bignorm{D^{\frac{it}{2}}(D^\frac12 a)D^{-\frac{it}{2}}}_2\\ 
&= \bignorm{T_{2,0}} \bignorm{xD^\frac12}_2
\bignorm{D^\frac12 a}_2.
\end{align*}
Likewise,
$$
F(1+it) = {\rm Tr}\Bigl(T_{2,1}
\Bigl(\sigma^{\varphi}_{\frac{t}{2}}(x) D^\frac12
\Bigr) 
D^\frac12 \sigma^{\varphi}_{\frac{t}{2}}(a)\Bigr),
$$
hence
$$
\vert F(1+it)\vert \leq \bignorm{T_{2,1}} \bignorm{xD^\frac12}_2
\bignorm{D^\frac12 a}_2.
$$
By the three lines lemma, we deduce that 
$$
\vert F(\theta)\vert \leq \bignorm{T_{2,0}}^{1-\theta}
\bignorm{T_{2,1}}^\theta\bignorm{xD^\frac12}_2
\bignorm{D^\frac12 a}_2.
$$
To calculate $F(\theta)$, we apply (\ref{MAG2}) again
and we obtain 
\begin{align*}
F(\theta) & = {\rm Tr}\Bigl(T
\bigl(D^{-\frac{1-\theta}{2}} x D^{\frac{1-\theta}{2}}\bigr) D^\frac12
D^{\frac{\theta}{2}} a  D^{-\frac{\theta}{2}}D^\frac12\Bigr)\\
& = {\rm Tr}\Bigl( D^{\frac{1-\theta}{2}}\,T
\bigl(D^{-\frac{1-\theta}{2}} x D^\frac12 
D^{-\frac{\theta}{2}}\bigr) 
D^{\frac{\theta}{2}} D^\frac12 a \Bigr)\\
& = {\rm Tr}\Bigl(T_{2,\theta}\bigl(xD^\frac12\bigr) D^\frac12 a\Bigr).
\end{align*}
Thus,
$$
\Bigl\vert {\rm Tr}\Bigl(T_{2,\theta}\bigl(xD^\frac12\bigr) D^\frac12 a\Bigr)
\Bigr\vert\leq  \bignorm{T_{2,0}}^{1-\theta}
\bignorm{T_{2,1}}^\theta\bignorm{xD^\frac12}_2
\bignorm{D^\frac12 a}_2.
$$
Since $M_aD^\frac12$ and $D^\frac12M_a$ are both dense 
in $L^2(M,\varphi)$, this estimate shows that $T_{2,\theta}$ is bounded, with
$\norm{T_{2,\theta}}\leq 
\bignorm{T_{2,0}}^{1-\theta}
\bignorm{T_{2,1}}^\theta$.

We now let $p\in(2,\infty)$. The proof in this 
case is a variant of the proof of \cite[Theorem 5.1]{HJX}. 
We use Kosaki's theorem which is presented after Lemma \ref{D-Gamma}, see
(\ref{Cp}) and (\ref{Ko1}).
Let $\theta\in[0,1]$. Let 
${\mathfrak J}_\theta\colon  M\to L^2(M,\varphi)$ be defined
by ${\mathfrak J}_\theta(x)=D^{\frac{1-\theta}{2}} xD^\frac{\theta}{2}$
for all $x\in M$. Equip  ${\mathfrak J}_\theta(M)$ with
\begin{equation}\label{NormM2}
\bignorm{D^{\frac{1-\theta}{2}} x 
D^\frac{\theta}{2}}_{{\mathfrak J}_\theta(M)} = \norm{x}_M,
\qquad x\in M.
\end{equation}
Consider $({\mathfrak J}_\theta (M),L^2(M,\varphi))$ as an 
interpolation couple. In analogy with (\ref{Cp}), we set
$$
E(p,\theta)=\bigl[{\mathfrak J}_\theta(M),L^2(M,\varphi)\bigr]_{\frac{2}{p}},
$$
subspace of $L^2(M,\varphi)$ given by the complex interpolation method.
Let $q\in(2,\infty)$ such that 
$$
\frac{1}{p}\,+\,\frac{1}{q}\,=\,\frac12.
$$
We introduce one more mapping
$U_\theta\colon L^2(M,\varphi)\to L^1(M,\varphi)$ defined by
$$
U_\theta(\zeta) = D^{\frac{1-\theta}{2}}\zeta D^{\frac{\theta}{2}},
\qquad \zeta\in L^2(M,\varphi).
$$
By (\ref{Ko1}), $U_\theta$ is an isometric isomorphism from 
$L^2(M,\varphi)$ onto $C(2,\theta)$. 
Since $U_\theta$ restricts to 
an isometric isomorphism from ${\mathfrak J}_\theta(M)$
onto $J_\theta(M)$, by (\ref{NormM1}) and (\ref{NormM2}),
it induces an isometric isomorphism
from $E(p,\theta)$ onto $\bigl[J_\theta(M),C(2,\theta)\bigr]_{\frac{2}{p}}$.
By (\ref{Cp}) and the reiteration theorem for complex interpolation
(see \cite[Theorem 4.6.1]{BL}), the latter is equal to $C(p,\theta)$.
Hence 
$U_\theta$ actually induces an isometric isomorphism
\begin{equation}\label{Iso}
E(p,\theta) \stackrel{U_\theta}{\simeq} 
C(p,\theta).
\end{equation}
Since $\frac{1}{p'}=\frac12+\frac{1}{q}\,$, we have
$$
U_\theta\bigl(D^{\frac{1-\theta}{q}} y D^{\frac{\theta}{q}}\bigr)
= D^{\frac{1-\theta}{p'}} y D^{\frac{\theta}{p'}}
$$
for all $y\in L^p(M,\varphi)$. Applying (\ref{Ko1}) and
(\ref{Iso}), we deduce that 
$$
E(p,\theta) = D^{\frac{1-\theta}{q}}L^{p}(M,\varphi)D^{\frac{\theta}{q}},
$$
with
\begin{equation}\label{NormEp}
\bignorm{D^{\frac{1-\theta}{q}}y
D^{\frac{\theta}{q}}}_{E(p,\theta)}
=\norm{y}_{L^p(M,\varphi)},\qquad y\in L^p(M,\varphi).
\end{equation}

Now let 
$$
S=T_{2,\theta}\colon L^2(M,\varphi)\longrightarrow L^2(M,\varphi)
$$
be given by the first part of the proof
(boundedness of $T_{2,\theta}$). By (\ref{NormM2}), 
$S$ is bounded on ${\mathfrak J}_\theta (M)$. Hence by the interpolation theorem, 
$S$ is bounded on $E(p,\theta)$.

Using (\ref{NormEp}), we deduce that for all $x\in M$,
\begin{align*}
\bignorm{D^{\frac{1-\theta}{p}} T(x)
D^{\frac{\theta}{p}}}_{L^p(M,\varphi)} 
& = \bignorm{D^{\frac{1-\theta}{2}} T(x)
D^{\frac{\theta}{2}}}_{E(p,\theta)}\\
& \leq \bignorm{S\colon E(p,\theta)\to E(p,\theta)} \bignorm{D^{\frac{1-\theta}{2}}x
D^{\frac{\theta}{2}}}_{E(p,\theta)}\\
& = \bignorm{S\colon E(p,\theta)\to E(p,\theta)}  \bignorm{D^{\frac{1-\theta}{p}} x
D^{\frac{\theta}{p}}}_{L^p(M,\varphi)}.
\end{align*}
This proves that $T_{p,\theta}$ is bounded and completes the proof.
\end{proof}

\begin{remark}\label{Estimate} Let $T\colon M\to M$ be a 
$2$-positive map such that $\varphi\circ T\leq C_1 T$ 
for some $C_1\geq 0$ and let $C_\infty=\norm{T}$.
It follows from the above proof and an obvious scaling that for any 
$p\geq 2$ and any $\theta\in[0,1]$, we have
$$
\bignorm{T_{p,\theta}\colon L^p(M,\varphi)\longrightarrow L^p(M,\varphi)}
\leq C_{\infty}^{1-\frac{1}{p}}C_1^{\frac{1}{p}}.
$$
\end{remark}

\begin{theorem}\label{GoodCase} 
Let $T\colon M\to M$ be a $2$-positive map
such that $\varphi\circ T\leq \varphi$ and let $1\leq p\leq 2$.
If 
\begin{equation}\label{Interval}
1 -\frac{p}{2}\,\leq \theta \leq \,\frac{p}{2},
\end{equation}
then $T_{p,\theta}\colon \A_{p,\theta}\to 
\A_{p,\theta}$ extends to a 
bounded map $L^p(M,\varphi)\to L^p(M,\varphi)$.
\end{theorem}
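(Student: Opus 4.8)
The plan is to obtain Theorem~\ref{GoodCase} by complex interpolation between the two boundary cases that are already available: the case $p=2$ (Theorem~\ref{p=2}, which gives $T_{2,\vartheta}$ bounded on $L^2(M,\varphi)$ for \emph{every} weight $\vartheta\in[0,1]$) and the case $p=1$, $\theta=\tfrac12$, which is \cite[Theorem 5.1]{HJX}. When $p=1$ the interval~(\ref{Interval}) degenerates to $\{\tfrac12\}$, so the statement reduces to \cite[Theorem 5.1]{HJX}; hence assume $1<p<2$ and set $s=\tfrac2p-1\in(0,1)$, the interpolation parameter for which $\tfrac1p=(1-s)\tfrac12+s\cdot1$, i.e. $L^p(M,\varphi)=[L^2(M,\varphi),L^1(M,\varphi)]_{s}$ isometrically (Kosaki, Terp).

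The mechanism is a three-lines argument on the strip $\overline{\S}$, in the spirit of the proof of the case $\theta\in(0,1)$ of Theorem~\ref{p=2}, but now letting the $L^p$-scaling interpolate as well. Concretely, for analytic elements one forms a function
$$
F(z)={\rm Tr}\Bigl(D^{\alpha(z)}\,T\bigl(\sigma^\varphi_{i\beta(z)}(x_z)\bigr)\,D^{\gamma(z)}\,\sigma^\varphi_{i\delta(z)}(a_z)\Bigr),\qquad z\in\overline{\S},
$$
where $\alpha,\beta,\gamma,\delta$ are affine and $x_z,a_z$ are analytic families of elements of $M_a$; since $x_z,a_z\in M_a$, the arguments of $T$ stay in $M$ for all $z$, so $F$ is the restriction of an entire function. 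The affine data and the families are chosen so that $F(s)={\rm Tr}\bigl(T_{p,\theta}(xD^{1/p})\,D^{1/p'}a\bigr)$ (using $\sigma^\varphi_{iu}(y)=D^{-u}yD^{u}$ for $y\in M_a$); so that on the edge ${\rm Re}(z)=0$, after absorbing the purely imaginary powers of $D$ produced by the modular twists as unitaries — using that $\|\cdot\|_2$ is invariant under left or right multiplication by $D^{it}$ — one has $|F(it)|\le\|T_{2,\vartheta}\colon L^2\to L^2\|\cdot\|xD^{1/p}\|_p\,\|D^{1/p'}a\|_{p'}$, finite by Theorem~\ref{p=2}; and so that on the edge ${\rm Re}(z)=1$, similarly $|F(1+it)|\le\|T_{1,1/2}\colon L^1\to L^1\|\cdot\|xD^{1/p}\|_p\,\|D^{1/p'}a\|_{p'}$, finite by \cite[Theorem 5.1]{HJX}. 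The three-lines lemma then yields $|F(s)|\le C_\infty^{1-1/p}\,\|xD^{1/p}\|_p\,\|D^{1/p'}a\|_{p'}$, and since $M_aD^{1/p}$ is dense in $L^p(M,\varphi)$ and $D^{1/p'}M_a$ is dense in $L^{p'}(M,\varphi)$, this shows that $T_{p,\theta}\colon L^p(M,\varphi)\to L^p(M,\varphi)$ is bounded.

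The interval~(\ref{Interval}) is exactly the admissibility condition for this construction. The left exponent $\alpha$ is forced to equal $\tfrac12$ at ${\rm Re}(z)=1$ and $\tfrac{1-\vartheta}{2}$ at ${\rm Re}(z)=0$ (so that the two edges genuinely see $T_{1,1/2}$ and $T_{2,\vartheta}$), while $F(s)$ forces $\alpha(s)=\tfrac{1-\theta}{p}$; being affine, $\alpha$ is then determined, with $\alpha(0)=\tfrac{p-2\theta}{4(p-1)}$, and the requirement that the edge-weight $\vartheta=1-2\alpha(0)=\tfrac{p+2\theta-2}{2(p-1)}$ lie in $[0,1]$ is precisely $1-\tfrac p2\le\theta\le\tfrac p2$; the conditions on $\beta,\gamma,\delta$ then impose no further restriction. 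The main obstacle is the correct construction of this holomorphic family, which is more delicate than in the proof of Theorem~\ref{p=2} because $p$ now varies: the Haagerup spaces $L^q(M,\varphi)$ for distinct $q$ have trivial intersection inside $L^0(\R,\tau_0)$, so $T_{p,\theta}$ is \emph{not} the restriction of a single operator to a dense subspace of all of them, and one must run the whole argument inside the common ambient space furnished by Kosaki's realization, while keeping $T$ evaluated only on elements of $M$ (this is the role of $M_a$ and the modular automorphism group) and letting both the exponent and the weight move along the strip in the prescribed affine way — in particular the test elements $x,a$ must themselves be deformed into the analytic families $x_z,a_z$ (a polar-type reparametrization adjusted by a scalar analytic factor) so that the sharp $L^p$ and $L^{p'}$ norms of $xD^{1/p}$ and $D^{1/p'}a$, rather than uncontrolled mixtures of $L^2$, $L^1$ and $L^\infty$ norms, appear on both edges. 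Once the affine data and the families $x_z,a_z$ are in place, what remains is bookkeeping with H\"older's inequality (Lemma~\ref{Holder}) and the identity $\sigma^\varphi_{iu}(x)=D^{-u}xD^{u}$.
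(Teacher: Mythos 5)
Your choice of endpoints and the bookkeeping that produces the interval (\ref{Interval}) are exactly right, and they coincide with the paper's: the weight $\vartheta=\frac{p+2\theta-2}{2(p-1)}$ you extract is precisely the paper's interpolation parameter $\eta=\frac{\theta-(1-p/2)}{p-1}$, and the admissibility condition $\vartheta\in[0,1]$ is indeed where (\ref{Interval}) comes from. The gap is in the implementation of the three-lines argument. Your plan requires analytic families $x_z,a_z$ that simultaneously (a) keep the argument of $T$ inside $M$ for every $z$ in the strip and (b) produce the sharp norms $\norm{xD^{1/p}}_p$ and $\norm{D^{1/p'}a}_{p'}$ on both edges. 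These two requirements are incompatible for the constructions you hint at: a modular deformation $\sigma^\varphi_{i\mu(z)}(x)$ times a scalar analytic factor stays in $M_a$, but on the edges it only produces conjugations by the unitaries $D^{it}$, so the edge estimates involve $\norm{xD^{1/2}}_2$ and $\norm{xD}_1$, which are not controlled by $\norm{xD^{1/p}}_p$; whereas a polar-type family $u\vert xD^{1/p}\vert^{p\lambda(z)}$ (the standard device for getting the sharp edge norms) does not lie in $M_aD^{(\cdot)}$, so $T$ cannot be applied to it. The phrase ``a polar-type reparametrization adjusted by a scalar analytic factor'' does not resolve this tension, and this is exactly the point where the argument as written would fail.

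The paper circumvents the difficulty by never applying $T$ along the strip at all. Contrary to what you assert, after the Kosaki embeddings the maps $T_{p,\theta}$ for the matching pairs \emph{are} restrictions of a single bounded operator: one takes $S=T_{1,\frac12}$ on $L^1(M,\varphi)$ (from \cite[Theorem 5.1]{HJX}), shows via Theorem \ref{p=2} and (\ref{Ko1}) that $S$ restricts to a bounded operator on the subspace $C(2,1-\eta)=D^{\eta/2}L^2(M,\varphi)D^{(1-\eta)/2}$ of $L^1(M,\varphi)$, and then invokes the reiteration theorem $C(p,1-\eta)=\bigl[C(2,1-\eta),L^1(M,\varphi)\bigr]_{\frac{2}{p}-1}$ to conclude that $S$ is bounded on $C(p,1-\eta)$; the identity (\ref{eta}) then identifies this restriction with $T_{p,\theta}$. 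The abstract interpolation theorem supplies exactly the analytic families your argument is missing, valued in $L^1(M,\varphi)$ rather than in $M_aD^{(\cdot)}$, which is legitimate precisely because $S$ has already been extended to all of $L^1(M,\varphi)$. If you want to keep a hands-on proof, restructure it in that order: extend first, deform second.
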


\begin{proof}
We will use Theorem \ref{p=2} on $L^2(M,\varphi)$, as well
as the fact that $T_{1,\frac12}$ is bounded, see \cite[Lemma 5.3]{HJX} or Remark \ref{Known}. 
Let $p\in(1,2)$, let $\theta$ satisfying (\ref{Interval}), and let 
$$
\eta= \frac{\theta -\bigl(1-\frac{p}{2}\bigr)}{p-1}.
$$
Then $\eta\in[0,1]$. This interpolation number is chosen in such a way that
\begin{equation}\label{eta}
\frac{\eta}{p'}\, +\, \frac{1-\theta}{p}\, =\, \frac{\theta}{p}\,+\, \frac{1-\eta}{p'}\,=\,\frac12\,,
\end{equation}
where $p'$ is the conjugate number of $p$.

We set 
$$
S=T_{1,\frac12}\colon L^1(M,\varphi)\longrightarrow L^1(M,\varphi).
$$
Let $V\colon L^2(M,\varphi)\to L^1(M,\varphi)$ defined by
$V(y)=D^{\frac{\eta}{2}} y D^{\frac{1-\eta}{2}}$  for all $y\in L^2(M,\varphi)$.
According to (\ref{Ko1}), $V$ is an isometric isomorphism from $L^2(M,\varphi)$ onto $C(2,1-\eta)$. 
Hence for all $x\in M$, we have
\begin{align*}
\bignorm{S(D^\frac12 x D^\frac12)}_{C(2,1-\eta)}
& = \bignorm{D^{\frac{\eta}{2}} D^{\frac{1-\eta}{2}}  T(x)  D^{\frac{\eta}{2}}  D^{\frac{1-\eta}{2}} }_{C(2,1-\eta)}\\
& = \bignorm{D^{\frac{1-\eta}{2}}  T(x)  D^{\frac{\eta}{2}}}_{L^2(M,\varphi)}\\
&\leq \bignorm{T_{2,\eta}}\bignorm{D^{\frac{1-\eta}{2}} x  D^{\frac{\eta}{2}}}_{L^2(M,\varphi)}\\
& = \bignorm{T_{2,\eta}}\bignorm{D^\frac12 
xD^\frac12 }_{C(2,1-\eta)}.
\end{align*}
Here the boundedness of $T_{2,\eta}$ is provided by Theorem  \ref{p=2}.
This proves that $S$ is bounded on $C(2,1-\eta)$.

By (\ref{Cp}) and the reiteration theorem, we have
$$
C(p,1-\eta)=\bigl[C(2,1-\eta),L^1(M,\varphi)\bigr]_{\frac{2}{p}-1}.
$$
Therefore, $S$ is bounded on $C(p,1-\eta)$. Using (\ref{Ko1}) again, as well as (\ref{eta}), we deduce that for any $x\in M$,
\begin{align*}
\bignorm{D^{\frac{1-\theta}{p}} T(x) D^{\frac{\theta}{p}}}_{L^p(M,\varphi)}
& = \bignorm{D^{\frac{\eta}{p'}} D^{\frac{1-\theta}{p}} T(x) D^{\frac{\theta}{p}} D^{\frac{1- \eta}{p'}}}_{C(p,1-\eta)}\\
&= \bignorm{D^\frac12 T(x) D^{\frac12}}_{C(p,1-\eta)}\\
& \leq \bignorm{S\colon C(p,1-\eta)\to C(p,1-\eta)}
\bignorm{D^\frac12 x D^{\frac12}}_{C(p,1-\eta)}\\
& = \bignorm{S\colon C(p,1-\eta)\to C(p,1-\eta)}
\bignorm{D^{\frac{1-\theta}{p}} x D^{\frac{\theta}{p}} }_{L^p(M,\varphi)}.
\end{align*}
This shows that $T_{p,\theta}$ is bounded.
\end{proof}

\section{The use of infinite tensor products}\label{TP}
In this section, we  show how to reduce the problem
of constructing a unital completely positive map $T\colon (M,\varphi)\to
(M,\varphi)$ such that $\varphi\circ T=\varphi$ and $T_{p,\theta}$ is unbounded,
for a certain pair $(p,\theta)$, to 
a finite dimensional question. In the sequel, by a matrix algebra $A$, we mean an algebra $A=M_n$ for some 
$n\geq 1$.

\begin{lemma}\label{FiniteProduct}
Let $A_1,A_2$ be two matrix algebras and for $i=1,2$,
consider a faithful state $\varphi_i$ on $A_i$.
Let $B=A_1\otimes_{\rm min} A_2$ and consider the faithful state
$\psi=\varphi_1\otimes \varphi_2$ on $B$. Let
$T_i\colon A_i\to A_i$ be a linear map, for $i=1,2$,
and consider $T=T_1\otimes T_2\colon B\to B$. Then
for any $1\leq p<\infty$ and any $\theta\in[0,1]$,
we have
\begin{align*}
\bignorm{T_{p,\theta}\colon L^p(B,\psi) 
&\to L^p(B,\psi)} \geq \\
&\bignorm{\{T_1\}_{p,\theta}\colon L^p(A_1,\varphi_1)\to 
L^p(A_1,\varphi_1)} \bignorm{\{T_2\}_{p,\theta}\colon L^p(A_2,\varphi_2)\to 
L^p(A_2,\varphi_2)}.
\end{align*}
\end{lemma}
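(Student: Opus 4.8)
The plan is to exploit the tensor structure of Haagerup $L^p$-spaces over a minimal tensor product and reduce the estimate to a computation on a single factor, via a clever "test element" argument. First I would recall that since $A_1,A_2$ are matrix algebras, $B=A_1\otimes_{\rm min}A_2 = A_1\otimes A_2$ is itself a matrix algebra, the density of $\psi=\varphi_1\otimes\varphi_2$ is $\Delta=D_1\otimes D_2$ (where $D_i$ is the density of $\varphi_i$), and by Proposition \ref{Transfer} all the operators $T_{p,\theta}$, $\{T_i\}_{p,\theta}$ can be realized as concrete finite-dimensional maps. The modular group of $\psi$ is $\sigma^\psi_t=\sigma^{\varphi_1}_t\otimes\sigma^{\varphi_2}_t$, so $A_1$ (and $A_2$) sits inside $B$ as a subalgebra stable under $\sigma^\psi$, and Remark \ref{CondExp} applies: the inclusion $A_1\hookrightarrow B$ induces, for every $1\le p<\infty$, an isometric embedding $\Lambda_1(p)\colon L^p(A_1,\varphi_1)\to L^p(B,\psi)$ sending $D_1^{1/(2p)}xD_1^{1/(2p)}$ to $\Delta^{1/(2p)}x\Delta^{1/(2p)}$, with an isometric "conditional expectation" $E_1(p)$ retracting it. The point, however, is that I need embeddings compatible with the parameter $\theta$, not just $\theta=\tfrac12$; so I would instead work directly with the factorization $\Delta^{(1-\theta)/p}=D_1^{(1-\theta)/p}\otimes D_2^{(1-\theta)/p}$ and similarly for the $\theta/p$ power.

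The core step: fix $\theta\in[0,1]$, $1\le p<\infty$ and $\varepsilon>0$; pick $x_1\in A_1$, $x_2\in A_2$ with $\bignorm{D_i^{(1-\theta)/p}x_iD_i^{\theta/p}}_{L^p(A_i,\varphi_i)}\le 1$ and $\bignorm{D_i^{(1-\theta)/p}T_i(x_i)D_i^{\theta/p}}_{L^p(A_i,\varphi_i)}\ge \bignorm{\{T_i\}_{p,\theta}}-\varepsilon$. Set $x=x_1\otimes x_2\in B$. Then
\begin{align*}
D^{\frac{1-\theta}{p}}_{\phantom{1}}xD^{\frac{\theta}{p}}_{\phantom{1}}
&=\Bigl(D_1^{\frac{1-\theta}{p}}x_1D_1^{\frac{\theta}{p}}\Bigr)\otimes
\Bigl(D_2^{\frac{1-\theta}{p}}x_2D_2^{\frac{\theta}{p}}\Bigr),
\end{align*}
where $D=\Delta$, and likewise with $T_i(x_i)$ in place of $x_i$ on both sides, since $T=T_1\otimes T_2$. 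The plan is then to invoke the tensor (cross-norm) property of Haagerup $L^p$-spaces for matrix algebras: $L^p(B,\psi)\cong L^p(A_1,\varphi_1)\otimes L^p(A_2,\varphi_2)$ isometrically in the sense that for simple tensors $\bignorm{y_1\otimes y_2}_{L^p(B,\psi)}=\bignorm{y_1}_{L^p(A_1,\varphi_1)}\bignorm{y_2}_{L^p(A_2,\varphi_2)}$. Concretely, via Proposition \ref{Transfer} this reduces to the identity $\bignorm{Y_1\otimes Y_2}_{S^p_{n_1n_2}}=\bignorm{Y_1}_{S^p_{n_1}}\bignorm{Y_2}_{S^p_{n_2}}$ for Schatten classes, which is standard (eigenvalues of $|Y_1\otimes Y_2|$ are products of those of $|Y_1|$ and $|Y_2|$). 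Applying this twice gives
\[
\bignorm{D^{\frac{1-\theta}{p}}T(x)D^{\frac{\theta}{p}}}_{L^p(B,\psi)}
=\prod_{i=1,2}\bignorm{D_i^{\frac{1-\theta}{p}}T_i(x_i)D_i^{\frac{\theta}{p}}}_{L^p(A_i,\varphi_i)}
\ge \prod_{i=1,2}\bigl(\bignorm{\{T_i\}_{p,\theta}}-\varepsilon\bigr),
\]
while the norm of the argument $D^{\frac{1-\theta}{p}}xD^{\frac{\theta}{p}}$ is $\le 1$; letting $\varepsilon\to0$ yields the claimed lower bound on $\bignorm{T_{p,\theta}}$.

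The main obstacle I anticipate is justifying the multiplicativity of the Haagerup $L^p$-norm on elementary tensors $D_1^{a}x_1D_1^{b}\otimes D_2^{a}x_2D_2^{b}$ cleanly, i.e. identifying $L^p(A_1\otimes A_2,\varphi_1\otimes\varphi_2)$ with the appropriate tensor product of $L^p(A_i,\varphi_i)$ compatibly with the density factorization. For matrix algebras this is genuinely elementary: transporting everything to Schatten classes via Proposition \ref{Transfer} with $\Gamma=\Gamma_1\otimes\Gamma_2$, the map $U_{p,\theta}$ for $T=T_1\otimes T_2$ factors as $U^{(1)}_{p,\theta}\otimes U^{(2)}_{p,\theta}$, and one only needs $\bignorm{Z_1\otimes Z_2}_{S^p}=\bignorm{Z_1}_{S^p}\bignorm{Z_2}_{S^p}$ together with the fact that $\bignorm{W_1\otimes W_2}_{S^p\to S^p}\ge\bignorm{W_1}_{S^p\to S^p}\bignorm{W_2}_{S^p\to S^p}$ for any operators $W_i$ on $S^p_{n_i}$ (tested on $Z_1\otimes Z_2$). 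I would present the argument in this transferred form, since it makes the tensor computation transparent and avoids delicate manipulations with $\tau_0$-measurable operators on the crossed product; the only care needed is to check that the densities, modular groups, and the maps $T_{p,\theta}$ all tensorize correctly, which follows from uniqueness of the relevant objects (the density, the trace $\tau_0$) under the identification $\mathcal R_B\cong\mathcal R_{A_1}\,\overline{\otimes}\,\mathcal R_{A_2}$ — or, more economically, is simply read off from Proposition \ref{Transfer}.
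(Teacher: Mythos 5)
Your proposal is correct and, in the final form you commit to (transferring everything to Schatten classes via Proposition \ref{Transfer} with $\Gamma=\Gamma_1\otimes\Gamma_2$, observing that $U_{p,\theta}=U^{(1)}_{p,\theta}\otimes U^{(2)}_{p,\theta}$, and testing on elementary tensors using $\norm{Y_1\otimes Y_2}_{S^p}=\norm{Y_1}_{S^p}\norm{Y_2}_{S^p}$), it is essentially identical to the paper's proof. The preliminary detour through Remark \ref{CondExp} and the Haagerup-picture factorization of the densities is unnecessary but harmless, since you correctly discard it in favour of the transferred argument.
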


\begin{proof}
Let $n_1,n_2\geq 1$ such that $A_1=M_{n_1}$ and $A_2=M_{n_2}$
and let $n=n_1n_2$.
For $i=1,2$, let $\Gamma_i\in M_{n_i}$ such that $\varphi_i(X_i)={\rm tr}(\Gamma_iX_i)$
for all $X_i\in M_{n_i}$. As in Proposition \ref{Transfer}, consider the mapping 
$\{U_i\}_{p,\theta}\colon S^p_{n_i}\to S^p_{n_i}$ defined by
$\{U_i\}_{p,\theta}(Y_i)=
\Gamma_i^{\frac{1-\theta}{p}} T_i\bigl(\Gamma_i^{-\frac{1-\theta}{p}}
Y_i\Gamma_i^{-\frac{\theta}{p}}\bigr)\Gamma_i^{\frac{\theta}{p}}$ for all
$Y_i\in S^p_{n_i}$. Using the standard identification 
\begin{equation}\label{Ident}
B=M_{n_1}\otimes_{\rm min} M_{n_2}\simeq M_n,
\end{equation}
we observe that $\psi(X) ={\rm tr}\bigl((\Gamma_1\otimes\Gamma_2)X)\bigr)$ for all $X\in M_n$. 
Hence using the identification $S^p_n=S^p_{n_1}\otimes S^p_{n_2}$ inherited
from (\ref{Ident}), we obtain the the mapping $U_{p,\theta}$ defined by
(\ref{Up}) is actually given by
$$
U_{p,\theta}=\{U_1\}_{p,\theta}\otimes \{U_2\}_{p,\theta}.
$$

For any $Y_1\in S^p_{n_1}$ and $Y_2\in S^p_{n_2}$, we have  $\norm{Y_1\otimes Y_2}_p = \norm{Y_1}_p \norm{Y_2}_p$. Hence
we deduce 
\begin{align*}
\norm{\{U_1\}_{p,\theta}(Y_1)}
\norm{\{U_2\}_{p,\theta}(Y_2)} & =\norm{\{U_1\}_{p,\theta}(Y_1)\otimes
\{U_2\}_{p,\theta}(Y_2)}\\ & =\norm{U_{p,\theta}(Y_1\otimes Y_2)}\\ & \leq \norm{U_{p,\theta}}
\norm{Y_1}_p\norm{Y_2}_p.
\end{align*}
This implies that $\norm{\{U_1\}_{p,\theta}}\norm{\{U_2\}_{p,\theta}}\leq\norm{U_{p,\theta}}$
Applying Proposition \ref{Transfer}, we obtain the requested inequality.
\end{proof}

Throughout the rest of this section, we let $(A_k)_{k\geq 1}$ 
be a sequence of matrix algebras.
For any $k\geq 1$, let $\varphi_k$ be a faithful state
on $A_k$. Let 
$$
(M,\varphi)=\overline{\otimes}_{k\geq 1}(A_k,\varphi_k)
$$
be the infinite tensor product associated with the 
$(A_k,\varphi_k).$ We refer to \cite[Section XIV.1]{T3}
for the construction and the properties of this tensor product. 
We merely recall that if we regard
$(A_1\otimes\cdots\otimes A_n)_{n\geq 1}$ as an increasing
sequence of (finite-dimensional) algebras
in the natural way, then
\begin{equation}\label{DenseB}
\B:=\bigcup_{n\geq 1} A_1\otimes\cdots\otimes A_n
\end{equation}
is $w^*$-dense in $M$. Further, $\varphi$ is a normal faithful state on $M$
such that
$$
\varphi_1 \otimes\cdots\otimes\varphi_n = 
\varphi_{\vert A_1\otimes\cdots
\otimes A_n},
$$
for all $n\geq 1$.

\begin{proposition}\label{Infinite}
Let $1\leq p<\infty$ and $\theta\in[0,1]$.
For any $k\geq 1$, let $T_k\colon A_k\to A_k$ be a unital completely 
positive map such that 
$\varphi_k\circ T_k=\varphi_k$. 
Assume that 
$$
\prod_{k=1}^n\bignorm{\{T_k\}_{p,\theta}\colon L^p(A_k,\varphi_k)\to L^p(A_k,\varphi_k)}
\longrightarrow\infty\qquad\hbox{when}\ n\to\infty.
$$
Then there exists a unital completely positive map $T\colon M\to M$ such that
$\varphi\circ T=\varphi$ and $T_{p,\theta}$ is unbounded.    
\end{proposition}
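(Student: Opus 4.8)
The plan is to take for $T$ the infinite tensor product $T=\overline{\otimes}_{k\geq1}T_k$ and to show that its restriction to each finite stage $A_1\otimes\cdots\otimes A_n$ already forces $\bignorm{T_{p,\theta}}$ to be at least $\prod_{k=1}^{n}\bignorm{\{T_k\}_{p,\theta}}$, which goes to infinity by hypothesis.

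First I would record that $T=\overline{\otimes}_{k\geq1}T_k$ is a well-defined normal unital completely positive map on $M$ with $\varphi\circ T=\varphi$: on the weak-$*$ dense subalgebra $\B$ of \eqref{DenseB} it is simply $\bigcup_{n}T_1\otimes\cdots\otimes T_n$, these maps being consistent because each $T_k$ is unital, and the extension to $M$ is the standard infinite tensor product of state-preserving normal u.c.p. maps (cf.\ \cite[Section XIV.1]{T3}). Set $A^{(n)}=A_1\otimes\cdots\otimes A_n$, $\varphi^{(n)}=\varphi_{\vert A^{(n)}}=\varphi_1\otimes\cdots\otimes\varphi_n$ and $T^{(n)}=T_1\otimes\cdots\otimes T_n$. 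Then $A^{(n)}$ is a matrix algebra, it is stable under the modular group $\sigma^{\varphi}$ (the modular group of a tensor product state being the tensor product of the modular groups), $T$ restricts to $T^{(n)}$ on $A^{(n)}$, and $\varphi^{(n)}\circ T^{(n)}=\varphi^{(n)}$.

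The key step is the comparison
$$
\bignorm{T_{p,\theta}\colon L^p(M,\varphi)\to L^p(M,\varphi)}\ \geq\ \bignorm{T^{(n)}_{p,\theta}\colon L^p(A^{(n)},\varphi^{(n)})\to L^p(A^{(n)},\varphi^{(n)})}
$$
for every $n$. Let $D,D_n$ be the densities of $\varphi,\varphi^{(n)}$ and let $\iota\colon A^{(n)}\hookrightarrow M$ be the inclusion. I want that $x\mapsto D^{\frac{1-\theta}{p}}\iota(x)D^{\frac{\theta}{p}}$ defines an \emph{isometry} $\iota_{p,\theta}\colon L^p(A^{(n)},\varphi^{(n)})\to L^p(M,\varphi)$; note that $A^{(n)}$ is finite dimensional, so $D_n^{\frac{1-\theta}{p}}A^{(n)}D_n^{\frac{\theta}{p}}$ is all of $L^p(A^{(n)},\varphi^{(n)})$ and every element of $A^{(n)}$ is analytic. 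Since Remark \ref{CondExp} supplies an isometric embedding $\Lambda(p)$ of $L^p(A^{(n)},\varphi^{(n)})$ into $L^p(M,\varphi)$ intertwining the $\theta=\tfrac12$ convention, I would reduce to that case exactly as in the proof of Lemma \ref{D-Gamma}: because $A^{(n)}$ is $\sigma^{\varphi}$-invariant, $\sigma^{\varphi}$ restricts to $\sigma^{\varphi^{(n)}}$ on $A^{(n)}$, so by \eqref{MAG2} applied in both $M$ and $A^{(n)}$ one has, for each $x\in A^{(n)}$,
$$
D^{\frac{1-\theta}{p}}\iota(x)D^{\frac{\theta}{p}}=D^{\frac1{2p}}\iota(x')D^{\frac1{2p}}\qquad\text{and}\qquad D_n^{\frac{1-\theta}{p}}xD_n^{\frac{\theta}{p}}=D_n^{\frac1{2p}}x'D_n^{\frac1{2p}}
$$
with the \emph{same} element $x'=\sigma^{\varphi^{(n)}}_{i(2\theta-1)/(2p)}(x)\in A^{(n)}$; applying $\Lambda(p)$ to $x'$ then gives $\bignorm{\iota_{p,\theta}(\xi)}_{L^p(M,\varphi)}=\bignorm{\xi}_{L^p(A^{(n)},\varphi^{(n)})}$. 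Since $T\circ\iota=\iota\circ T^{(n)}$, the defining formula \eqref{T} gives $T_{p,\theta}\circ\iota_{p,\theta}=\iota_{p,\theta}\circ T^{(n)}_{p,\theta}$, and composing with the isometry $\iota_{p,\theta}$ yields the displayed inequality.

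Finally I combine this with Lemma \ref{FiniteProduct}: writing $A^{(n)}=A_1\otimes_{\rm min}(A_2\otimes\cdots\otimes A_n)$ and iterating that lemma gives $\bignorm{T^{(n)}_{p,\theta}}\geq\prod_{k=1}^{n}\bignorm{\{T_k\}_{p,\theta}}$, which tends to $\infty$. Hence $\bignorm{T_{p,\theta}\colon L^p(M,\varphi)\to L^p(M,\varphi)}=\infty$, i.e.\ $T_{p,\theta}$ is unbounded, with $T$ unital completely positive and $\varphi\circ T=\varphi$. The only point demanding genuine care is the third paragraph — transporting the $\theta=\tfrac12$ isometry of Remark \ref{CondExp} to arbitrary $\theta$ via the modular automorphism group, and checking that $T_{p,\theta}$ restricts correctly along it; the rest is bookkeeping on top of Lemma \ref{FiniteProduct}.
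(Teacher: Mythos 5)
Your proof is correct and is essentially the paper's argument: reduce to the finite stages $B_n=A_1\otimes\cdots\otimes A_n$ via an isometric embedding $L^p(B_n,\psi_n)\hookrightarrow L^p(M,\varphi)$ that intertwines $T_{p,\theta}$ with $(T_1\otimes\cdots\otimes T_n)_{p,\theta}$, then iterate Lemma~\ref{FiniteProduct}; your direct modular-group computation showing that the embedding of Remark~\ref{CondExp} carries $D_n^{\frac{1-\theta}{p}}xD_n^{\frac{\theta}{p}}$ to $D^{\frac{1-\theta}{p}}xD^{\frac{\theta}{p}}$ is a legitimate substitute for the paper's appeal to \cite[Proposition 5.5]{HJX}. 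The one step you defer to a citation --- well-definedness and normality of $T=\overline{\otimes}_k T_k$ --- is where the paper does most of its work, constructing $T$ as the adjoint of the compatible predual contractions $S_n=(T_1\otimes\cdots\otimes T_n)_*$ on $L^1(B_n,\psi_n)$ glued along the conditional-expectation embeddings (the compatibility $V(n)\circ F_n=F_n\circ V(n+1)$ being precisely where $\varphi_k\circ T_k=\varphi_k$ enters), which is exactly the argument your ``standard construction of infinite tensor products of state-preserving u.c.p.\ maps'' unwinds to.
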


\begin{proof} For any $n\geq 1$, we introduce 
$B_n=A_1\otimes_{\rm min}\cdots\otimes_{\rm min} A_n$ and the faithful state
$$
\psi_n=\varphi_1 \otimes\cdots\otimes\varphi_n 
$$ 
on $B_n$.
According to \cite[Proposition XIV.1.11]{T3}, the modular automorphism
group of $\varphi$ preserves $B_n$. Consequently (see Remark \ref{CondExp}),
there  exists a unique normal conditional expectation $E_n \colon M\to B_n$
such that $\varphi=\psi_n\circ E_n$, and the pre-adjoint of $E_n$ yields an
isometric embedding
$$
L^1(B_n,\psi_n)\hookrightarrow L^1(M,\varphi).
$$
Likewise, let $F_n\colon B_{n+1}\to B_n$ be the conditional
expectation defined by 
\begin{equation}\label{Fn}
F_n(a_1\otimes\cdots\otimes a_n\otimes a_{n+1}) =
\varphi_{n+1}(a_{n+1}) \,a_1\otimes\cdots\otimes a_n,
\end{equation}
for all $a_1\in A_1,\ldots, a_n\in A_n, a_{n+1}\in A_{n+1}$. Then the
pre-adjoint of $F_n$ yields an isometric embedding
$$
J_n\colon L^1(B_n,\psi_n)\hookrightarrow L^1(B_{n+1},\psi_{n+1}).
$$
We can therefore consider $\bigl(L^1(B_n,\psi_n)\bigl)_{n\geq 1}$
as an increasing sequence of subspaces of $L^1(M,\varphi)$.
We introduce 
$$
\mathcal L :=\bigcup_{n\geq 1} L^1(B_n,\psi_n)\,\subset\, L^1(M,\varphi).
$$
Let $D\in L^1(M,\varphi)$ be the density of $\varphi$.
It follows from Remark \ref{CondExp} that 
$$
\mathcal L = D^\frac12 \B D^\frac12,
$$
where $\B$ is defined by (\ref{DenseB}). 
Since $\B$ is $w^*$-dense, it is dense in $M$ for the strong 
operator topology given by the standard representation $M\hookrightarrow B(L^2(M,\varphi))$.
Hence by
\cite[Lemma 2.2]{J}, $\B D^\frac12$ is dense in 
$L^2(M,\varphi)$. This implies that 
$\mathcal L$ is dense in $L^1(M,\varphi)$. 

For any $n\geq 1$, let
$$
V(n) := T_1\otimes\cdots\otimes T_n\colon B_n\longrightarrow B_n.
$$
This is a unital completely positive map. 
Hence its norm is equal to 1.
Let 
$$
S_n=V(n)_*\colon
L^1(B_n,\psi_n)\longrightarrow L^1(B_n,\psi_n)
$$
be the pre-adjoint of $V(n)$. Then $\norm{S_n}=1$.
We observe that 
\begin{equation}\label{Comm}
J_n\circ S_n = S_{n+1}\circ J_n.
\end{equation}
Indeed by duality, this amounts to show that $V(n)\circ F_n=F_n\circ V(n+1)$,
where $F_n$ is given by (\ref{Fn}). The latter is 
true because $\varphi_{n+1}\circ T_{n+1}=\varphi_{n+1}$.

Thanks to (\ref{Comm}), one may define
$$
\S\colon \mathcal L \longrightarrow \mathcal L
$$
by letting $\S(\eta)=S_n(\eta)$ if $\eta\in L^1(B_n,\psi_n)$.
Then $\S$ is bounded, with $\norm{\S}=1$. 
Owing to the density of $\mathcal L$, there exists a unique bounded  
$S\colon L^1(M,\varphi)\to L^1(M,\varphi)$ extending 
$\S$. Using the duality (\ref{1Dual-L1}), we set 
$$
T=S^*\colon M \longrightarrow M.
$$
By construction, $T$ is a contraction. Furthermore,
for each $n\geq 1$, $S_n^*=V(n)$
is a unital completely positive map and $\psi_{n}\circ S_n^*=\psi_n$.
We deduce that 
$T$ is unital and completely positive and that 
$$
\varphi\circ T=\varphi.
$$

Let $1\leq p<\infty$ and let $\theta\in[0,1]$.
Let us use the isometric embedding 
\begin{equation}\label{5Emb}
L^p(B_n,\psi_n)\hookrightarrow L^p(M,\varphi)
\end{equation}
as explained in Remark \ref{CondExp}. If $D_n$ denotes the density of $\psi_n$, then it follows from
\cite[Proposition 5.5]{HJX} that the embedding (\ref{5Emb}) 
maps $D_n^{\frac{1-\theta}{p}} x D_n^{\frac{\theta}{p}}$
to $D^{\frac{1-\theta}{p}} x D^{\frac{\theta}{p}}$ for all
$x\in B_n$.
Then the restriction of $T_{p,\theta}\colon \A_{p,\theta}\to L^p(M,\varphi)$ coincides
with 
$$
V(n)_{p,\theta}\colon L^p(B_n,\psi_n)\longrightarrow L^p(B_n,\psi_n).
$$
Finally we observe that by a simple iteration of Lemma 
\ref{FiniteProduct}, we have
$$
\norm{V(n)_{p,\theta}}\geq 
\prod_{k=1}^n\bignorm{\{T_k\}_{p,\theta}\colon L^p(A_k,\varphi_k)\to L^p(A_k,\varphi_k)}.
$$
The assumption that this product of norms tends 
to $\infty$ therefore implies that  the operator 
$T_{p,\theta}$ is unbounded.
\end{proof}

\section{Non-extension results}\label{No} 
The aim of this section is to show the following.

\begin{theorem}\label{Main}
Let $1\leq p<2$.  If either
\begin{equation}\label{Choice}
0\leq \theta<\frac12\bigl(1-\sqrt{p-1}\bigr)
\qquad\hbox{or}\qquad \frac12\bigl(1+\sqrt{p-1}\bigr)<\theta\leq 1,
\end{equation}
then there exist a von Neumann algebra $M$ equipped with a normal faithful state
$\varphi$, as well as a unital completely positive map $T\colon M\to M$ 
such that $\varphi\circ T=\varphi$ and the mapping 
$T_{p,\theta}\colon \A_{p,\theta}\to \A_{p,\theta}$ defined by (\ref{T}) is unbounded.
\end{theorem}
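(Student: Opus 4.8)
The plan is to reduce, via Proposition~\ref{Infinite}, to a purely finite-dimensional estimate: it suffices to produce, for each pair $(p,\theta)$ satisfying (\ref{Choice}), a sequence of matrix algebras $A_k=M_{n_k}$, faithful states $\varphi_k$, and unital completely positive maps $T_k\colon A_k\to A_k$ with $\varphi_k\circ T_k=\varphi_k$, such that the product $\prod_{k=1}^n\norm{\{T_k\}_{p,\theta}}$ tends to $\infty$. Since the norms $\norm{\{T_k\}_{p,\theta}}$ are always $\geq 1$ (the constant $1$ is attained on $D_k^{\frac{1-\theta}{p}}1\,D_k^{\frac{\theta}{p}}$), it is in fact enough to find \emph{a single} finite-dimensional example $(A,\varphi,T)$ with $\norm{T_{p,\theta}}>1$: then taking all $(A_k,\varphi_k,T_k)$ equal to that example makes the product grow like $\norm{T_{p,\theta}}^n\to\infty$. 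So the whole theorem comes down to: for each admissible $(p,\theta)$, exhibit a finite-dimensional unital completely positive $\varphi$-preserving $T$ with $\norm{T_{p,\theta}}>1$, and equivalently (by Proposition~\ref{Transfer}) with $\norm{U_{p,\theta}\colon S^p_n\to S^p_n}>1$.

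The natural first candidate is the smallest nontrivial case $A=M_2$ with a conditional expectation onto the diagonal, or more symmetrically a trace-preserving-type map twisted by a non-tracial state. Concretely I would take $\Gamma=\mathrm{diag}(\lambda,1-\lambda)$ for a parameter $\lambda\in(0,1/2)$ far from $1/2$, and for $T$ the unital completely positive map $T=\varphi(\cdot)1$ (compression to scalars followed by inclusion), which manifestly satisfies $\varphi\circ T=\varphi$ and is completely positive and unital. For this $T$ one computes $U_{p,\theta}$ explicitly on $S^p_2$: it sends $Y$ to $\varphi(\Gamma^{-\frac{1-\theta}{p}}Y\Gamma^{-\frac{\theta}{p}})\,\Gamma^{\frac{1-\theta}{p}}\!\cdot 1\cdot\Gamma^{\frac{\theta}{p}}=\mathrm{tr}(\Gamma^{\frac{\theta}{p}}\Gamma^{1-\frac{1}{p}}\Gamma^{-\frac{1-\theta}{p}}Y)\,\Gamma^{\frac{1}{p}}$, a rank-one operator. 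Its $S^p_n$-norm is therefore the product of the $S^{p'}_n$-norm of the functional (i.e. the $S^{p'}$-norm of $\Gamma^{\frac{\theta}{p}+1-\frac{1}{p}-\frac{1-\theta}{p}}=\Gamma^{\frac{2\theta-1}{p}+1-\frac{1}{p}}$, reading the coefficient matrix correctly) times $\norm{\Gamma^{1/p}}_{S^p}=1$. Writing everything out in terms of $\lambda$ gives $\norm{U_{p,\theta}}$ as an explicit function $g(\lambda,p,\theta)$, and the task is to show $g>1$ exactly in the regime (\ref{Choice}) for a suitable choice of $\lambda$. I expect that expanding $g$ near $\lambda=1/2$ to second order produces a quadratic form in $(\lambda-\tfrac12)$ whose leading coefficient has sign governed by $\theta^2-\theta+(p-1)/\text{something}$; the boundary values $\theta=\frac12(1\pm\sqrt{p-1})$ are precisely the roots of $\theta^2-\theta+\frac{2-p}{?}=0$-type equation, which is a strong hint that this or a very close variant of the computation is the right one.

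The main obstacle, and the step I would spend the most care on, is getting the exponent bookkeeping in $U_{p,\theta}$ right and then carrying out the optimization over $\lambda$ to pin down the \emph{exact} threshold in (\ref{Choice}) rather than merely some threshold. In particular one must be careful: for a rank-one $U_{p,\theta}$ the $S^p\to S^p$ operator norm is easy, but the simple scalar map $T=\varphi(\cdot)1$ may only yield a \emph{sub-optimal} range of $(p,\theta)$, in which case I would instead use $T=$ a conditional expectation $E\colon M_2\to \mathbb{C}\oplus\mathbb{C}$ (the diagonal subalgebra), which is $\varphi$-preserving when $\varphi$ is a diagonal state and is unital completely positive, and whose $U_{p,\theta}$ is a (completely positive) \emph{Schur-type} multiplier on $S^p_2$; its norm can be computed or lower-bounded by testing on the off-diagonal matrix unit $e_{12}$, giving $\norm{U_{p,\theta}(e_{12})}_p/\norm{e_{12}}_p = |\lambda^{a}(1-\lambda)^{b}\cdot(\text{off-diagonal weight})|$ which one optimizes over $\lambda$. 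Either way, the analytic heart is a one-variable optimization whose critical-point analysis must reproduce the two roots $\theta=\frac12(1\pm\sqrt{p-1})$; once that is confirmed for a fixed admissible $(p,\theta)$, feeding the resulting finite-dimensional example into Proposition~\ref{Infinite} with all factors equal closes the proof immediately, since $\prod_{k=1}^n\norm{\{T_k\}_{p,\theta}}=\norm{T_{p,\theta}}^n\to\infty$.
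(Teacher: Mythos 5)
Your overall reduction is exactly the paper's: use Proposition \ref{Infinite} with all factors equal to a single finite-dimensional example $(A,\varphi_0,T_0)$ having $\norm{\{T_0\}_{p,\theta}}>1$, so that the product of norms grows geometrically, and compute that norm via Proposition \ref{Transfer}. The gap is in the one step you defer, which is where the entire difficulty of the theorem lives: \emph{both} of your concrete candidates for $T_0$ provably fail, and for the same structural reason. If $\varphi(X)={\rm tr}(\Gamma X)$ with $\Gamma$ diagonal, then both $T=\varphi(\cdot)1$ and the $\varphi$-preserving conditional expectation $E$ onto the diagonal subalgebra commute with the modular automorphism group $\sigma^\varphi_t(x)=\Gamma^{it}x\Gamma^{-it}$; hence by \cite[Proposition 5.5]{HJX} (quoted in the introduction) one has $T_{p,\theta}=T_{p,\frac12}$ for all $\theta$, which is a contraction. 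You can also see this directly in your own formulas: for $T=\varphi(\cdot)1$ the rank-one map is $Y\mapsto {\rm tr}(\Gamma^{1-\frac1p}Y)\,\Gamma^{\frac1p}$ --- the $\theta$-dependence cancels because $\Gamma$ commutes with itself --- and its $S^p\to S^p$ norm is $\norm{\Gamma^{1-\frac1p}}_{p'}\norm{\Gamma^{\frac1p}}_p=1$ exactly; for $T=E$ your proposed test vector gives $U_{p,\theta}(E_{12})=\Gamma^{\frac{1-\theta}{p}}E(\Gamma^{-\frac{1-\theta}{p}}E_{12}\Gamma^{-\frac{\theta}{p}})\Gamma^{\frac{\theta}{p}}=0$, so there is nothing to optimize. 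No amount of tuning $\lambda$ rescues either candidate, and the second-order expansion you anticipate never materializes from them.

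The missing idea is that $T_0$ must \emph{fail} to commute with the modular group, and more specifically must couple $E_{12}$ to $E_{21}$, since these two matrix units transform with opposite phases under $\sigma^\varphi_t$ and therefore acquire reciprocal weights $\delta^{\pm1}$ with $\delta=\bigl(\frac{1-c}{c}\bigr)^{\frac{2\theta-1}{p}}$ after conjugation by $\Gamma^{\pm\frac{\cdot}{p}}$. The paper's choice (Proposition \ref{Key}) is the unital completely positive, $\varphi$-preserving map on $M_2$ with $T(E_{11})=(1-c)I_2$, $T(E_{22})=cI_2$ and $T(E_{12})=T(E_{21})=(c(1-c))^{\frac12}(E_{12}+E_{21})$; complete positivity is checked via the Choi matrix, and then a genuine two-parameter optimization (over the state parameter $c=\frac12+t$ \emph{and} over the anti-diagonal test matrix $aE_{12}+bE_{21}$, with $(a,b)$ chosen by Lagrange multipliers when $p>1$) followed by a second-order Taylor expansion in $t$ yields $\mathfrak m_t\equiv 1+2q\bigl((2\theta-1)^2-\frac{p}{q}\bigr)t^2$, whose positivity is exactly condition (\ref{Choice}). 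So the threshold does not come from a one-variable optimization over the state alone, as you suggest, but from the joint choice of a non-modular-invariant map and an optimized anti-diagonal input; without such a map the argument cannot get off the ground.
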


This result will be proved at the end of this section, as a simple combination of 
Proposition \ref{Infinite} and the following key result. Recall that $M_2$ denotes the
space of $2\times 2$ matrices.

\begin{proposition}\label{Key}
Let $1\leq p<2$ and let $\theta\in[0,1]$ be satisfying (\ref{Choice}).
Then there exist a unital completely positive map $T\colon M_2\to M_2$ and
a faithful state $\varphi$ on $M_2$ such that $\varphi\circ T=\varphi$
and $\norm{T_{p,\theta}}>1$.
\end{proposition}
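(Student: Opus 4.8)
The plan is to make everything explicit on $M_2$ and reduce the claim $\norm{T_{p,\theta}}>1$ to a computation in $S^p_2$ via Proposition \ref{Transfer}. Fix a density matrix $\Gamma=\mathrm{diag}(s,1-s)$ with $s\in(0,1)$, so $\varphi(X)=\mathrm{tr}(\Gamma X)$, and look for $T$ among the completely positive unital maps that fix $\varphi$. A convenient family is the maps built from a single Kraus-type symmetry: take $T$ to be the averaging map $T(X)=\tfrac12(X+UXU^*)$ for a cleverly chosen unitary (or, more flexibly, $T(X) = (1-\lambda)X + \lambda\,\omega(X)\mathbf 1$ for a suitable state $\omega$, which is automatically unital completely positive). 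The constraint $\varphi\circ T=\varphi$ pins down $\omega=\varphi$ in the second model, giving $T_\lambda(X)=(1-\lambda)X+\lambda\,\varphi(X)\mathbf 1$, a one-parameter family of unital completely positive $\varphi$-preserving maps. Then by Proposition \ref{Transfer}, $\norm{(T_\lambda)_{p,\theta}}=\norm{U_{p,\theta}\colon S^p_2\to S^p_2}$ where $U_{p,\theta}(Y)=(1-\lambda)Y+\lambda\,\mathrm{tr}(\Gamma^{1/p'}Y\Gamma^{?})\,\Gamma^{\frac{1-\theta}{p}}\mathbf 1\Gamma^{\frac{\theta}{p}}$ after substituting the definition of $\varphi$ and the conjugating powers of $\Gamma$; I will organize the exponents carefully so the rank-one correction term is visible.

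\textbf{Reduction to a rank-one perturbation of the identity on $S^p_2$.} The key point is that $U_{p,\theta}$ is the identity plus a rank-one operator on the four-dimensional space $S^p_2$, so its norm is accessible. Concretely, write $U_{p,\theta}=I+\lambda R$, where $R$ sends $Y$ to a scalar multiple (depending linearly on $Y$) of a fixed rank-one matrix $e:=\Gamma^{\frac{1-\theta}{p}}\otimes\text{(appropriate)}$... more precisely $e = \Gamma^{\frac{1-\theta}{p}}\mathbf1\Gamma^{\frac{\theta}{p}} - (\text{identity contribution})$. To show $\norm{U_{p,\theta}}>1$ it suffices to exhibit one $Y_0\in S^p_2$ with $\norm{Y_0}_p=1$ and $\norm{U_{p,\theta}(Y_0)}_p>1$; the natural test vector is $Y_0=\Gamma^{-\frac{1-\theta}{p}}E_{11}\Gamma^{-\frac{\theta}{p}}$ (so that $T$ is applied to $E_{11}$), or its off-diagonal analogue, chosen so that the rank-one correction adds \emph{constructively} in $S^p_2$-norm rather than cancelling. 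I would compute $\norm{Y_0}_p$ and $\norm{U_{p,\theta}(Y_0)}_p=\norm{\Gamma^{\frac{1-\theta}{p}}T(E_{11})\Gamma^{\frac{\theta}{p}}}_p$ directly: $T(E_{11})=(1-\lambda)E_{11}+\lambda s\,\mathbf1$, so the relevant matrix is a $2\times 2$ matrix with one entry inflated, whose $S^p_2$-norm is an explicit function of $s,\lambda,p,\theta$.

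\textbf{The analytic heart: choosing $s$ and $\lambda$.} The real work is showing that for $\theta$ in the bad range \eqref{Choice} one can choose $s$ close to $0$ (or $1$) and $\lambda$ small so that the ratio $\norm{U_{p,\theta}(Y_0)}_p/\norm{Y_0}_p$ exceeds $1$. Here is where the bound $\theta<\tfrac12(1-\sqrt{p-1})$ must enter. I expect the following mechanism: for small $\lambda$, expand $\norm{U_{p,\theta}(Y_0)}_p^p$ to first order in $\lambda$; the derivative at $\lambda=0$ is a linear functional of the correction, and it is \emph{positive} precisely when an exponent inequality involving $\theta$, $1-\theta$, $p$ and $s$ holds. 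Letting $s\to 0$, the dominant balance between the diagonal entries $s^{\frac{2}{p}(\cdots)}$ and the cross terms $s^{\frac{1-\theta}{p}+\frac{\theta}{p}+\cdots}$ degenerates, and the first-order term in $\lambda$ stays positive exactly under the constraint that the quadratic $\theta(1-\theta)<\tfrac{p-1}{4}$ \emph{fails} — i.e. $\theta^2-\theta+\tfrac{p-1}{4}>0$, whose roots are $\tfrac12(1\pm\sqrt{1-(p-1)})=\tfrac12(1\pm\sqrt{2-p})$. (I will need to recheck whether the relevant threshold is $\sqrt{p-1}$ or $\sqrt{2-p}$ once the exponent bookkeeping is done; the statement says $\sqrt{p-1}$, so the constraint is presumably $\theta(1-\theta) > \text{something}(p)$ rearranging to the stated endpoints, and the role of $s\to 0$ versus $s\to 1$ distinguishes the two intervals in \eqref{Choice}.) The main obstacle is therefore purely computational bookkeeping: correctly tracking the fractional powers of $\Gamma$ on both sides, differentiating the $S^p_2$-norm (a sum of two terms $|\mu_\pm|^p$ where $\mu_\pm$ are eigenvalues of an explicit Hermitian $2\times 2$ matrix) at $\lambda=0$, and extracting the sharp threshold on $\theta$ as $s\to 0^+$. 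Once the sign of that derivative is pinned to the inequality \eqref{Choice}, choosing $\lambda$ small and positive finishes the proof, and $\norm{T_{p,\theta}}=\norm{U_{p,\theta}}\ge\norm{U_{p,\theta}(Y_0)}_p>1$.
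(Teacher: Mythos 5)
There is a genuine gap, and it is fatal to the specific construction rather than a bookkeeping issue. Every map in the family you propose commutes with the modular automorphism group of $\varphi$. For $T_\lambda(X)=(1-\lambda)X+\lambda\,\varphi(X)\mathbf 1$ this is immediate, since $\sigma^\varphi_t(X)=\Gamma^{it}X\Gamma^{-it}$, $\varphi$ is $\sigma^\varphi_t$-invariant and $\mathbf 1$ is fixed; for the averaging map $\tfrac12(X+UXU^*)$, the constraint $\varphi\circ T=\varphi$ forces $U^*\Gamma U=\Gamma$, so $U$ commutes with $\Gamma$ and the same conclusion holds. But by \cite[Proposition 5.5]{HJX} (quoted in the Introduction), any such $T$ satisfies $T_{p,\theta}=T_{p,\frac12}$ for all $\theta$, and then \cite[Theorem 5.1]{HJX} gives $\norm{T_{p,\theta}}\leq 1$. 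You can see the collapse directly in your own formula: substituting $T_\lambda$ into (\ref{Up}) and using that $\Gamma$ is diagonal gives $U_{p,\theta}(Y)=(1-\lambda)Y+\lambda\,{\rm tr}\bigl(\Gamma^{1/p'}Y\bigr)\Gamma^{1/p}$, in which $\theta$ does not appear at all. No choice of $s$ and $\lambda$ can make this exceed norm $1$, so the "analytic heart" of your argument has nothing to optimize. The diagonal test vector $Y_0=\Gamma^{-\frac{1-\theta}{p}}E_{11}\Gamma^{-\frac{\theta}{p}}$ compounds the problem: for any $T$, on matrices commuting with $\Gamma$ the two powers of $\Gamma$ in (\ref{Up}) merge into a single $\Gamma^{1/p}$, so the diagonal part of $S^p_2$ can never detect $\theta$.

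The feature your construction is missing is that $T$ must genuinely mix the eigenspaces of the modular group. The paper's example takes $T(E_{12})=T(E_{21})=(c(1-c))^{1/2}(E_{12}+E_{21})$ (with $T(E_{11})=(1-c)I_2$, $T(E_{22})=cI_2$ to ensure unitality, complete positivity via Choi's theorem, and $\varphi\circ T=\varphi$), and then tests $U_{p,\theta}$ on the \emph{anti-diagonal} part $Y=aE_{12}+bE_{21}$, where $\sigma^\varphi_t$ acts by the phases $((1-c)/c)^{\pm it}$. This is exactly where the quantity $\delta=((1-c)/c)^{(2\theta-1)/p}$ enters, and a second-order Taylor expansion around $c=\tfrac12$ produces $\mathfrak m_t\equiv 1+\alpha t^2$ with $\alpha=8q(\theta-\theta_0)(\theta-\theta_1)$, $\theta_{0,1}=\tfrac12(1\mp\sqrt{p-1})$ — confirming, incidentally, that the threshold you were unsure about is $\sqrt{p-1}$, not $\sqrt{2-p}$. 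Your overall outline (explicit $M_2$ example, reduction via Proposition \ref{Transfer}, perturbative expansion pinning the sign to a quadratic in $\theta$) matches the paper's strategy, but the class of maps and the test vectors you chose lie entirely inside the regime where boundedness is already known, so the argument cannot be repaired without replacing the construction itself.
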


\begin{proof}
Let $c\in(0,1)$ and consider 
$$
\Gamma =\begin{pmatrix} 1-c & 0 \\ 0 & c\end{pmatrix}.
$$
This is a positive invertible matrix with trace equal to $1$. We let 
$\varphi$ denote its associated
faithful state on $M_2$, that is, 
$\varphi(X)={\rm tr}(\Gamma X)=(1-c)x_{11} 
+cx_{22}$, for all $X=\begin{pmatrix}  x_{11} & x_{12} \\ x_{21} 
& x_{22}\end{pmatrix}$
in $M_2$.

Let $E_{i,j}$, $1\leq i,j\leq 2$, denote the standard matrix units of $M_2$.
Let $T\colon M_2\to M_2$ be the linear map defined by
$$
T(E_{11})=(1-c)I_2,\quad T(E_{22})=cI_2,\quad \hbox{and}\quad
T(E_{21})=T(E_{12})= \bigl(c(1-c)\bigr)^\frac12\bigl(E_{12}+E_{21}\bigr).
$$
Let $A=\bigl[T(E_{ij})\bigr]_{1\leq i,j\leq 2}\in M_2(M_2)$. If we regard $A$ as an element of $M_4$, we have
$$
A=\begin{pmatrix} 1-c & 0 & 0 &  (c(1-c))^\frac12 \\ 0 &1-c &  (c(1-c))^\frac12 & 0\\
0 & (c(1-c))^\frac12 &  c & 0 \\ (c(1-c))^\frac12 & 0 & 0 & c\end{pmatrix}.
$$
Clearly $A$ is unitarily equivalent to $B\otimes I_2$, with
$$
B= \begin{pmatrix} 1-c & (c(1-c))^\frac12 \\  (c(1-c))^\frac12 & c\end{pmatrix}.
$$
It is plain that $B$ is positive. Consequently, $A$ is positive. 
Hence $T$ is completely positive, by Choi's theorem (see e.g. 
\cite[Theorem 3.14]{Pa}). Furthermore, $T$ is unital.
We note that $\varphi(T(E_{11}))= \varphi(E_{11})=1-c$,
$\varphi(T(E_{22}))= \varphi(E_{22})=c$, $\varphi(T(E_{12}))= \varphi(E_{12})=0$
and $\varphi(T(E_{21}))= \varphi(E_{21})=0$. Thus,
$$
\varphi\circ T=\varphi.
$$

Our aim is now to estimate $\norm{T_{p,\theta}}$, using 
Proposition \ref{Transfer}. We let  
$U_{p,\theta}\colon S^p_2\to S^p_2$ be defined by
(\ref{Up}). We shall focus on the action of 
$U_{p,\theta}$ on the anti-diagonal part of $S^p_2$.
First, we have
$$
\Gamma^{-\frac{1-\theta}{p}}E_{12}\Gamma^{-\frac{\theta}{p}}
= (1-c)^{-\frac{1-\theta}{p}}  c^{-\frac{\theta}{p}} E_{12}.
$$
Hence
\begin{align*}
T\bigl(\Gamma^{-\frac{1-\theta}{p}}E_{12}\Gamma^{-\frac{\theta}{p}}\bigr) &
= (1-c)^{-\frac{1-\theta}{p}}  c^{-\frac{\theta}{p}} T(E_{12})\\ &
= (1-c)^{-\frac{1-\theta}{p}}  c^{-\frac{\theta}{p}}
(c(1-c))^\frac12\bigl(E_{12}+E_{21}\bigr).
\end{align*}
Hence
\begin{align*}
U_{p,\theta}(E_{12}) &
= (1-c)^{-\frac{1-\theta}{p}}  c^{-\frac{\theta}{p}}
(c(1-c))^\frac12\Bigl(\Gamma^{\frac{1-\theta}{p}}E_{12} \Gamma^{\frac{\theta}{p}}
+\Gamma^{\frac{1-\theta}{p}}E_{21} \Gamma^{\frac{\theta}{p}}\Bigr)\\ &
= (1-c)^{-\frac{1-\theta}{p}}  c^{-\frac{\theta}{p}}
(c(1-c))^\frac12\Bigl( (1-c)^{\frac{1-\theta}{p}}  c^{\frac{\theta}{p}} E_{12}
+ c^{\frac{1-\theta}{p}}  (1-c)^{\frac{\theta}{p}} E_{21}\Bigr)\\ &
= (c(1-c))^\frac12\biggl( E_{12} + \Bigl(\frac{1-c}{c}\Bigr)^{\frac{2\theta-1}{p}}E_{21}\biggr).
\end{align*}
Likewise, we have
$$
U_{p,\theta}(E_{21}) = (c(1-c))^\frac12\biggl(\Bigl(\frac{c}{1-c}\Bigr)^{\frac{2\theta-1}{p}}E_{12} + E_{21}\biggr).
$$

Set 
\begin{equation}\label{delta}
\delta=\Bigl(\frac{1-c}{c}\Bigr)^{\frac{2\theta-1}{p}}.
\end{equation}
Consider 
$$
Y=\begin{pmatrix} 0 & a \\ b & 0\end{pmatrix}
\qquad\hbox{with}\quad \vert a\vert^p+\vert b\vert^p=1,
$$
so that $\norm{Y}_p=1$. Then
\begin{align*}
U_{p,\theta}(Y) & = (c(1-c))^\frac12\bigl(aE_{12} 
+a\delta E_{21} + b\delta^{-1} E_{12} + b E_{21}\bigr)\\
& = (c(1-c))^\frac12\bigl((a+b\delta^{-1}) E_{12} + (a\delta +b)E_{21}\bigr).
\end{align*}
Hence
\begin{equation}\label{Y}
\norm{U_{p,\theta}(Y)}_p^p= (c(1-c))^\frac{p}{2}\bigl(
(a+b\delta^{-1})^p +(a\delta +b)^p\bigr).
\end{equation}

To prove 
Proposition \ref{Key},
it therefore suffices to show that for any
$1\leq p<2$ and  $\theta\in[0,1]$ satisfying (\ref{Choice}), there exist
$a,b>0$ and $c\in(0,1)$ such that 
$$
a^p+b^p=1\qquad\hbox{and}\qquad 
(c(1-c))^\frac{p}{2}\bigl(
(a+b\delta^{-1})^p +(a\delta +b)^p\bigr)>1,
$$
where $\delta$ is given by (\ref{delta}).

We first assume that \underline{$p>1$.} 
We let $q=\frac{p}{p-1}$ denote its conjugate
exponent. 
Given $c\in(0,1)$ and $\delta$ as above, we define
\begin{equation}\label{ab}
a=\biggl(\frac{\delta^q}{1+\delta^{q}}\biggr)^{\frac{1}{p}}
\qquad\hbox{and}\qquad
b=\biggl(\frac{1}{1+\delta^{q}}\biggr)^{\frac{1}{p}}.
\end{equation}
They satisfy $a^p+b^p=1$ as required. Note that these values of $(a,b)$ are chosen
in order to maximize the quantity
$(c(1-c))^\frac{p}{2}\bigl(
(a+b\delta^{-1})^p +(a\delta +b)^p\bigr)$,
 according to 
the Lagrange multiplier method.

We set 
$$
c_t=\frac12+t,\qquad -\frac12 <t<\frac12.
$$
Then we denote by $\delta_t,a_t,b_t$ the real numbers 
$\delta,a,b$ defined by (\ref{delta}) and (\ref{ab})
when $c=c_t$. Also we set 
$$
\gamma_t=(c_t(1-c_t))^\frac{p}{2}\qquad \hbox{and}\qquad
{\mathfrak m}_t= \gamma_t\bigl(
(a_t+b_t\delta_t^{-1})^p +(a_t\delta_t +b_t)^p\bigr).
$$
It follows from above that
it suffices to show that ${\mathfrak m}_t>1$ 
for some $t\in\bigl(0,\frac12\bigr)$.
We will prove this property by writing the second order
Taylor expansion of ${\mathfrak m}_t$.

We have
$$
(a_t+b_t\delta_t^{-1})^p +(a_t\delta_t +b_t)^p  = 
(1+\delta_t^{-p})(a_t\delta_t +b_t)^p.
$$
Moreover
$$
a_t\delta_t = \frac{\delta_t^{\frac{q}{p} +1}}{(1+\delta_t^q)^{\frac{1}{p}}}\,
=\frac{\delta_t^{q}}{(1+\delta_t^q)^{\frac{1}{p}}}\,.
$$
Hence 
$$
(a_t+b_t\delta_t^{-1})^p +(a_t\delta_t +b_t)^p = 
(1+\delta_t^{-p}) (1+ \delta_t^q\bigr)^{p-1}.
$$
Consequently, 
$$
{\mathfrak m}_t = 
\gamma_t
(1+\delta_t^{-p}) (\delta_t^q +1 \bigr)^{p-1}.
$$

In the sequel, we write 
$$
f_t\equiv g_t
$$
when $f_t=g_t+o(t^2)$ when $t\to 0$.

We note that $c_t(1-c_t)=\bigl(\frac12+t\bigr)\bigl(\frac12-t\bigr)
=\frac14\bigl(1-4t^2\bigr)$. We deduce that
\begin{equation}\label{gt}
\gamma_t\equiv  \frac{1}{2^p} (1-2pt^2).
\end{equation}

We set $\lambda=2\theta-1$ for convenience. Then we have
\begin{align*}
\delta_t & =\Bigl(\frac{1-2t}{1+2t}\Bigr)^{\frac{\lambda}{p}} \\
&\equiv \bigl((1-2t)(1-2t+4t^2)\bigr)^{\frac{\lambda}{p}} \\
&\equiv (1-4t +8t^2)^{\frac{\lambda}{p}} \\
&\equiv 1 -\frac{4\lambda}{p} t + \frac{8\lambda}{p}t^2 +
\frac12\,\frac{\lambda}{p}\Bigl(\frac{\lambda}{p}-1\Bigr)(4t)^2\\
&\equiv 1 -\frac{4\lambda}{p} t + \frac{8\lambda^2}{p^2}t^2.
\end{align*}
This implies that
\begin{align*}
\delta_t^q & \equiv 1 -\frac{4\lambda q}{p} t 
+ \frac{8\lambda^2 q}{p^2}t^2
+\frac12\,q(q-1)\Bigl(\frac{4\lambda}{p}\Bigr)^2 t^2\\
&\equiv 1 -\frac{4\lambda q}{p} t + \frac{8\lambda^2 q^2}{p^2}t^2.
\end{align*}
Likewise,
\begin{equation}\label{-p}
\delta_t^{-p} \equiv 1 +4\lambda t + 8\lambda^2t^2.
\end{equation}
Since $p-1=\frac{p}{q}$, we have
\begin{align*}
(1+\delta_t^q)^{p-1}
& \equiv 2^{\frac{p}{q}}\Bigl(
1 -\frac{2\lambda q}{p} t + \frac{4\lambda^2 q^2}{p^2} t^2\Bigr)^{\frac{p}{q}}\\
& \equiv 2^{\frac{p}{q}}\Bigl(1 -2\lambda t+\frac{4\lambda^2 q}{p} t^2
+\frac12\,\frac{p}{q}\Bigl(\frac{p}{q}-1\Bigr)
\Bigl(\frac{2\lambda q}{p}\Bigr)^2 t^2\Bigr)\\
&\equiv 2^{\frac{p}{q}}\bigl(1-2\lambda t + 2 \lambda^2 qt^2\bigr).
\end{align*}
Combining this expansion with (\ref{gt}) and (\ref{-p}), we deduce that
\begin{align*}
{\mathfrak m}_t &\equiv \frac{1}{2^p} (1-2pt^2)\,\cdotp
2(1 +2\lambda t + 4\lambda^2t^2)
\,\cdotp 2^{\frac{p}{q}}(1-2\lambda t + 2 \lambda^2 qt^2)\\
&\equiv (1-2pt^2)(1+2\lambda^2 qt^2).
\end{align*}
Consequently,
\begin{equation}\label{Alpha}
{\mathfrak m}_t \equiv 1+\alpha t^2\qquad\hbox{with}\qquad \alpha = 2(\lambda^2q-p).
\end{equation}
The second order coefficient $\alpha$ can be written as
\begin{align*}
\alpha & = 2q \Bigl((2\theta -1)^2 -\frac{p}{q}\Bigr)\\
& =8q\Bigl(\theta^2-\theta +\frac{q-p}{4q}\Bigr)\\
& =8q(\theta-\theta_0)(\theta-\theta_1),
\end{align*}
with
$$
\theta_0=\frac12\bigl(1-\sqrt{p-1}\bigr)
\qquad\hbox{and}\qquad
\theta_1=\frac12\bigl(1+\sqrt{p-1}\bigr).
$$

Now assume (\ref{Choice}). Then $\alpha>0$. Hence (\ref{Alpha}) 
ensures the existence
of $t>0$ such that ${\mathfrak m}_t>1$, which  
concludes the proof (in the case $p>1$).

We now consider the case \underline{$p=1$.} We apply the same method as before,
with
$$
a=1\qquad \hbox{and}\qquad b=0. 
$$
According to (\ref{Y}), 
it will suffice  to show that whenever $\theta\not=\frac12$, there exists 
$c\in(0,1)$ such that 
$(c(1-c))^\frac12(1+\delta)>1$.

Again we set $c_t=\frac12+t$, for $-\frac12<t<\frac12$,
we define $\delta_t$ accordingly and we set
$$
{\mathfrak m}_t = (c_t(1-c_t))^\frac12(1+\delta_t).
$$
It follows from the previous calculations that 
$$
(c_t(1-c_t))^\frac12 =\frac12 +o(t)\qquad\hbox{and}\qquad  
\delta_t = 1 -4(2\theta-1) t +o(t). 
$$
Consequently
$$
{\mathfrak m}_t =1 -2(2\theta-1) t +o(t).
$$
This order one expansion ensures
that if $\theta\not=\frac12$, then there exists
$t\in\bigl(-\frac12,\frac12\bigr)$ such that 
${\mathfrak m}(t)>1$, which  
concludes the proof (in the case $p=1$). 
\end{proof}

\begin{proof}[Proof of Theorem \ref{Main}]
Let $(p,\theta)$ satisfying (\ref{Choice}). Thanks to 
Proposition \ref{Key}, let $T_0\colon M_2\to M_2$ and let
$\varphi_0$ be a faithful state on $M_2$ such that $\varphi_0\circ T_0=\varphi_0$
and $\norm{\{T_0\}_{p,\theta}}>1$. We apply Proposition \ref{Infinite} with
$(A_k,\varphi_k,T_k)=(M_2,\varphi_0,T_0)$ for all $k\geq 1$.
In this case, 
$$
\prod_{k=1}^n\bignorm{\{T_k\}_{p,\theta}} = \norm{\{T_0\}_{p,\theta}}^n,
$$
and the latter goes to $\infty$ when $n\to\infty$. Hence $T_{p,\theta}$ is unbounded.
\end{proof}

\begin{remark}
With Theorem \ref{p=2}, Theorem \ref{GoodCase} and Theorem \ref{Main}, 
we have solved Question \ref{Q} in the following cases: 
(i) $p\geq 2$ and $\theta\in[0,1]$; 
(ii) $1\leq p<2$ and $\theta\in\bigl[1-p/2,p/2\bigr]$; 
(iii) $1\leq p<2$ and 
$\theta\in\bigl[0,2^{-1}(1-\sqrt{p-1})\bigr)$; 
(iv)  $1\leq p<2$ and 
$\theta\in \bigl(2^{-1}(1+\sqrt{p-1}),1\Bigr]$.

However we do not know the answer to Question \ref{Q} when 
$1\leq p<2$ and 
$$
\theta\in\bigl[2^{-1}(1-\sqrt{p-1}), 1-p/2\bigr)
\qquad\hbox{or}\qquad
\theta\in\bigl(p/2, 2^{-1}(1+\sqrt{p-1})\bigr].
$$
Writing a $(+)$ when Question 2.2 has a positive answer, a $(-)$ when it has a negative answer and a $(?)$ when we do not know the answer, we obtain the following diagram:
\bigskip

\begin{center}
\begin{tikzpicture}[scale=5]
  \draw[domain=1:3,smooth,variable=\x] plot ({\x},{0.5}) node[right]{};

  \draw[domain=2:3,smooth,variable=\x] plot ({\x},{1}) node[right]{};

  \draw[domain=2:3,smooth,variable=\x] plot ({\x},{0}) node[right]{};
  
  \draw[] (1,0) -- (1,1) node[above] {};
  
  \draw[domain=1:2,smooth,variable=\x] plot ({\x},{\x/2}) {};
  
  \draw[domain=1:2,smooth,variable=\x] plot ({\x},{1-(\x/2)}) {};
  
  \draw[domain=1:2,smooth,variable=\x] plot ({\x},{0.5*(1+sqrt(\x-1))}) node[right] {};
  
  \draw[domain=1:2,smooth,variable=\x] plot ({\x},{0.5*(1-sqrt(\x-1))}) node[right] {};
  
  \begin{scope}
    \clip (1,0.5) -- (2,0) -- (2,1) -- (1,0.5) -- cycle;
    \foreach \i in {0,...,30}
      \foreach \j in {0,...,30}
        \node at ({1+\i/10},{\j/10}) {+};
  \end{scope}
    \begin{scope}
    \clip (2,1) -- (3,1) -- (3,0) -- (2,0) -- cycle;
    \foreach \i in {0,...,30}
      \foreach \j in {0,...,30}
        \node at ({2+\i/10},{\j/10}) {+};
  \end{scope}

  \begin{scope}
    \clip (1,0.5) -- plot[smooth,domain=1:2] ({\x},{0.5*(1-sqrt(\x-1))}) -- (2,0) -- (1,0.5) -- cycle;
    \foreach \i in {0.05,0.1,...,2}
      \foreach \j in {0.05,0.1,...,1}
        \node at (\i,\j) {?};
  \end{scope}
    \begin{scope}
    \clip (1,0.5) -- plot[smooth,domain=1:2] ({\x},{0.5*(1+sqrt(\x-1))}) -- (1,0.5) -- (2,0) -- cycle;
    \foreach \i in {0.05,0.1,...,2}
      \foreach \j in {0.05,0.1,...,1}
        \node at (\i,\j) {?};
  \end{scope}

 
  \begin{scope}
   \clip (1,0.5) -- plot[smooth,domain=1:2] ({\x},{0.5*(1-sqrt(\x-1))})  -- (1,0.5) -- (2,0) -- (1,0) -- (1,0.5) -- cycle;
   \foreach \i in {0,0.05,0.1,...,2.1}
      \foreach \j in {0,0.05,0.1,...,1.1}
        \node at (\i,\j) {-};
  \end{scope}
    \begin{scope}
   \clip (1,0.5) -- plot[smooth,domain=1:2] ({\x},{0.5*(1+sqrt(\x-1))})  -- (1,0.5) -- (2,1) -- (1,1) -- (1,0.5) -- cycle;
   \foreach \i in {0.05,0.1,...,2.1}
      \foreach \j in {0,0.05,0.1,...,1.1}
        \node at (\i,\j) {-};
  \end{scope}
\fill (1,0.5) circle (0.5pt) node[left] {(1,0.5)};
\fill (1,1) circle (0.5pt) node[left] {(1,1)};
\fill (1,0) circle (0.5pt) node[left] {(1,0)};
\fill (2,1) circle (0.5pt) node[above right] {(2,1)};
\fill (2,0) circle (0.5pt) node[below right] {(2,0)};
\end{tikzpicture}
\end{center}
\end{remark}

\bigskip
\noindent
{\bf Acknowledgements.} 
We thank \'Eric Ricard for several stimulating discussions.
The first author was supported by the ANR project Noncommutative
analysis on groups and quantum groups (No./ANR-19-CE40-0002).
The authors gratefully thank the support of the Heilbronn Institute for Mathematical Research and the UKRI/EPSRC Additional Funding Program.

\bigskip

\begin{thebibliography}{99}
\bibitem{A} C. Arhancet, {\it On Matsaev's conjecture for contractions 
on noncommutative $L_p$-spaces}, 
J. Operator Theory 69 (2013), no. 2, 387–421.
\bibitem{A2} Arhancet, {\it Dilations of semigroups on von Neumann 
algebras and noncommutative $L_p$-spaces},
J. Funct. Anal. 276 (2019), no. 7, 2279–2314.
\bibitem{BL} J. Bergh  and J. L\"ofstr\"om,
{\it Interpolation spaces, an introduction},  
Grundlehren der Mathematischen Wissenschaften, 
No. 223. Springer-Verlag, Berlin-New York, 1976, x+207 pp.
\bibitem{CDS} M. Caspers and M. de la Salle,
{\it Schur and Fourier multipliers of an amenable group acting on non-commutative $L_p$-spaces},  Trans. Amer. Math. Soc. 367 (2015), no. 10, 6997–7013. 
\bibitem{Choi} M.-D. Choi, 
{\it A Schwarz inequality for positive linear maps 
on $C^*$-algebras}, Illinois J. Math. 18 (1974), 565–574.
\bibitem{DDDP} P. Dodds, T. Dodds and B. de Pagter, 
{\it Noncommutative Banach function spaces},
Math. Z. 201 (1989), no. 4, 583–597. 
\bibitem{DL} C. Duquet and C. Le Merdy, {\it  A characterization of absolutely dilatable Schur multipliers}, Adv. Math. 439 (2024), Paper No. 109492.
\bibitem{H} U. Haagerup, {\it $L^p$-spaces associated with an abritrary 
von Neumann algebra},  In ``Algèbres d'opérateurs et leurs applications en physique mathématique (Proc. Colloq., Marseille, 1977)", pp. 175–184, Colloq. Internat. CNRS, 274, CNRS, Paris, 1979. 
\bibitem{HJX} U. Haagerup, M. Junge and Q. Xu, 
{\it A reduction method for noncommutative $L_p$-spaces and applications},
Trans. Amer. Math. Soc. 362 (2010), no. 4, 2125–2165. 
\bibitem{Hiai} F. Hiai, {\it Lectures on selected topics in
von Neumann algebras}, EMS Series of Lectures in Mathematics, 
EMS Press, Berlin, 2021.
\bibitem{HRW} G. Hong, S. K. Ray and S. Wang, {\it 
Maximal ergodic inequalities for some positive operators on noncommutative $L_p$-spaces}, J. Lond. Math. Soc. (2) 108 (2023), no. 1, 362–408. 
\bibitem{J} M. Junge, {\it Doob's inequality for non-commutative martingales},
J. Reine Angew. Math. 549 (2002), 149–190.
\bibitem{JL} M. 
Junge and N. LaRacuente, {\it 
Multivariate trace inequalities, $p$-fidelity, and universal recovery beyond tracial settings}, 
J. Math. Phys. 63 (2022), no. 12, Paper No. 122204, 43 pp.
\bibitem{JLX} M. Junge, C. Le Merdy and Q. Xu, {\it 
$H^\infty$-functional calculus and square functions on noncommutative 
$L_p$-spaces} Astérisque Soc. Math. France 305 (2006), vi+138 pp.
\bibitem{JX1} M. Junge and Q. Xu, {\it Noncommutative Burkholder/Rosenthal inequalities}, Annals of Prob. 31 (2003), 948-995.
\bibitem{JX}  M. Junge and Q. Xu, {\it Noncommutative maximal ergodic theorems},
J. Amer. Math. Soc. 20 (2007), no. 2, 385–439.
\bibitem{Ko} H. Kosaki, 
{\it Applications of the complex interpolation method to a von Neumann algebra: noncommutative $L_p$-spaces}, 
J. Funct. Anal. 56 (1984), 29–78.
\bibitem{Pa} V. Paulsen, {\it Completely bounded maps and operator algebras},
Cambridge Studies in Advanced Mathematics, 78,
Cambridge University Press, Cambridge, 2002, xii+300 pp. 
\bibitem{PX} G. Pisier and Q. Xu, {\it Non-commutative $L^p$-spaces}, 
Handbook of the geometry of Banach spaces, Vol. 2, 1459–1517, North-Holland, Amsterdam, 2003.
\bibitem{T2}  M. Takesaki, {\it Theory of operator algebras II},
Encyclopaedia of Mathematical Sciences, 125,
Springer-Verlag, Berlin, 2003.
\bibitem{T3}  M. Takesaki, {\it Theory of operator algebras III},
Encyclopaedia of Mathematical Sciences, 127,
Springer-Verlag, Berlin, 2003. 
\bibitem{Terp} M. Terp, {\it $L_p$-spaces associated with von Neumann algebras}, Notes, Math. Institute, Copenhagen University, 1981.
\end{thebibliography}
\end{document}